
 \documentclass[twoside,reqno,11pt]{fcaa-var}

\usepackage{graphicx}
\usepackage{epsfig}
\usepackage{amsthm}
\usepackage{amsmath}
\usepackage{latexsym}
\usepackage{amsfonts}
\usepackage{amssymb}

 \textwidth  12.5cm \textheight 19cm
 \topmargin 0in
 \evensidemargin 1.3cm \oddsidemargin 1.3cm

 \hoffset 0.71cm \voffset 2.10cm
 \baselineskip=18pt
 \parindent=18pt

\newtheoremstyle{theorem}
  {15pt}          
  {15pt}  
  {\sl}  
  {\parindent}
  {\sc}  
  {. }   
  { }    
  {}     
\theoremstyle{theorem}
\newtheorem{lemma}{Lemma}[section]
\newtheorem{theorem}{Theorem}[section]
\newtheorem{corollary}{Corollary}[section]
\newtheorem{proposition}{Proposition}[section]

\newtheoremstyle{defi}
  {15pt}          
  {15pt}  
  {\rm}  
  {\parindent}     
  {\sc}  
  {. }    
  { }    
  {}     
\theoremstyle{defi}

 
 \def\proofend{\hfill$\Box$}


 \usepackage{hyperref} 


 \usepackage{amsmath}
 \usepackage{amssymb}
 \usepackage{comment}
\newcommand{\pal}{\partial}
\newcommand{\al}{\alpha}
\newcommand{\sgn}{\operatorname{sgn}}
\textwidth 14cm

 \title[On solutions of linear FDEs and systems thereof]{On solutions of linear fractional differential equations and systems thereof}
 \author[Kh. Dorjgotov, H. Ochiai, U. Zunderiya]{Khongorzul Dorjgotov $^1$, Hiroyuki Ochiai $^2$, Uuganbayar Zunderiya $^3$}

 \begin{document}

 \vbox to 2.5cm { \vfill }


 \bigskip \medskip

\begin{abstract}
It is well-known that one-dimensional time fractional diffusion-wave equations with variable coefficients can be reduced to ordinary fractional differential equations and systems of linear fractional differential equations via scaling transformations. We then derive exact solutions to classes of linear fractional differential equations and systems thereof expressed in terms of Mittag-Leffler functions, generalized Wright functions and Fox H-functions. These solutions are invariant solutions of diffusion-wave equations obtained through certain transformations, which are briefly discussed. We show that the solutions given in this work contain previously known results as particular cases.\\
\smallskip
{\it Key Words and Phrases}: Riemann-Liouville fractional derivative, fractional ordinary differential equation, generalized Wright function, Fox H-function, Mittag-Leffler function, exact solution
\end{abstract}
\maketitle
\vspace*{-16pt}
\section{Introduction}\label{sec:1}
\setcounter{section}{1}
\setcounter{equation}{0}
\setcounter{theorem}{0}
Fractional differentiation was first discussed in Leibniz's notes. Since that time, almost three centuries ago, fractional differentiation has been developed mainly as a purely theoretical field of mathematics. However, for the last several decades, the application of fractional differentiation to the mathematical modeling of physical problems has become increasingly common, due to the fact that fractional differentiation provides a useful tool for the description of memory and hereditary properties of various materials and processes \cite{Pod,Kilbas,rida}. In particular, anomalous diffusion processes in complex systems, from charge transport in amorphous semiconductors to bacterial motion, have been successfully modeled with fractional diffusion wave equations \cite{Metz}. As in the case of integer-order differential equations, various methods, including Adomian decomposition and integral and differential transforms, have been applied to the problem of solving fractional differential equations \cite{turk}.

In this work, we derive exact solutions expressed in terms of well-known special functions for fractional ordinary differential equations (FODEs) of the form
\begin{equation}\label{eq6}
\frac{d^\al}{d z^\al}\varphi(z)=\frac{a_m}{\al^m}z^m\frac{d^m}{d z^m}\varphi(z)+\frac{a_{m-1}}{\al^{m-1}}z^{m-1}\frac{d^{m-1}}{d z^{m-1}}\varphi(z)+\cdots+\frac{a_{1}}{\al}z\frac{d}{d z}\varphi(z)+a_0\varphi(z),
\end{equation}
where $\al\in\mathbb{R}_+$, $a_i\in\mathbb{R}$ $(i=0,\dots,m)$ and $a_m\neq 0,$ and for systems of the form
\begin{equation}\label{eq7}
\begin{cases}
\frac{d^\al}{dz^\al}\varphi(z) = \frac{a_{m_1}}{\al^{m_1}}z^{m_1}\frac{d^{m_1}}{dz^{m_1}}\psi(z)+\frac{a_{{m_1}-1}}{\al^{m_1-1}}z^{m_1-1}\frac{d^{m_1-1}}{dz^{m_1-1}}\psi(z)+\cdots+\frac{a_1}{\al}z\frac{d}{dz}\psi(z)+a_0\psi(z),\\
\frac{d^\al}{dz^\al}\psi(z) = \frac{b_{m_2}}{\al^{m_2}}z^{m_2}\frac{d^{m_2}}{dz^{m_2}}\varphi(z)+\frac{b_{{m_2}-1}}{\al^{m_2-1}}z^{m_2-1}\frac{d^{m_2-1}}{dz^{m_2-1}}\varphi(z)+\cdots+\frac{b_1}{\al}z\frac{d}{dz}\varphi(z)+b_0\varphi(z),
\end{cases}
\end{equation}
where $\al\in\mathbb{R}_+$, $a_i, b_j\in\mathbb{R}$ $(i=0,\dots,m_1; j=0,\dots,m_2)$ and  $a_{m_1}b_{m_2}\neq 0$. Here, for $\al\in\mathbb{R}_+$, fractional differentiation is defined in the Riemann-Liouville manner:
\begin{equation}\label{eq3}
\frac{d^\al }{d z^\al}\varphi(z):=
\begin{cases}
\frac{d^n }{d z^n}\varphi(z),\quad \mbox{ for }\al=n\in\mathbb{N},\\
\frac{1}{\Gamma(n-\al)}\frac{d^n}{d z^n}\int_0^z(z-s)^{n-\al-1}\varphi(s)ds, \mbox{ for } \al\in(n-1,n) \mbox{ with } n\in\mathbb{N}.
\end{cases}
\end{equation}
Throughout this work, we consider $n\in\mathbb{N}$ satisfying $0\leq n-1<\al<n$. 

It is interesting to consider the forms taken by \eqref{eq6} and \eqref{eq7} in the particular cases that $m=2$ and $m_1=m_2=1,$ because these are the cases most commonly considered in scientific and engineering fields. In these cases, we obtain the FODE
\begin{equation}\label{eq1}
\frac{d^\al}{d z^\al}\varphi(z)=a\varphi(z)+\frac{b}{\al}z\frac{d}{dz}\varphi(z)+\frac{c}{\al^2}z^2\frac{d^2}{d z^2}\varphi(z), \mbox{ where } a, b, c\in\mathbb{R}
\end{equation}
and the system of FODEs
\begin{equation}\label{eq2}
\begin{cases}
\frac{d^\al}{d z^\al}\varphi(z) = a_1\psi(z)+\frac{b_1}{\al}z\frac{d}{dz}\psi(z),\\
\frac{d^\al}{d z^\al}\psi(z) = a_2\varphi(z)+\frac{b_2}{\al}z\frac{d}{dz}\varphi(z),
\end{cases} \mbox{ where } a_1, a_2, b_1, b_2\in\mathbb{R}.
\end{equation}
In the case that $\al=1$ and $\varphi(z)$ takes the form $\varphi(z)=z^r e^{-\frac{1}{cz}}\phi(\frac{1}{cz})$, \eqref{eq1} reduces to Kummer's equation, 
\begin{equation*}
z^2\frac{d^2}{dz^2}\phi(z)+\left(2r-\frac{b}{c}+2-z\right)\frac{d}{dz}\phi(z)+\left(\frac{b}{c}-r-2\right)\phi(z)=0,
\end{equation*}
where $r=-\frac{b-c+\sqrt{(b-c)^2-4ac}}{2c}.$ Further, in the case that $\al=2$ and $\varphi(z)$ takes the form $\varphi(z)=(cz^2-1)^{-\frac{b-2c}{4c}}\phi(\sqrt{c}z)$, \eqref{eq1} reduces to the associated Legendre differential equation, 
\begin{equation*}
(1-z^2)\frac{d^2}{dz^2}\phi(z)-2z\frac{d}{dz}\phi(z)+\left(l(l+1)-\frac{s^2}{1-z^2}\right)\phi(z)=0,
\end{equation*}
where $l=\frac{-c+\sqrt{b^2+c^2-4ac-2bc}}{2c}$ and $s=\frac{b}{2c}-1.$   The solutions of the above equations can be expressed in terms of Kummer's function and associated Legendre functions, respectively.

Interestingly, \eqref{eq1} and \eqref{eq2} can also be obtained from the fractional diffusion-wave equation
\begin{equation}\label{eq4}
\frac{\pal^\al u}{\pal t^\al}=c(x)^2u_{x x}
\end{equation}
and the system 
\begin{equation}\label{eq5}
\begin{cases}
\frac{\pal^\al u}{\pal t^\al}=c(x)^2v_{x},\\
\frac{\pal^\al v}{\pal t^\al}=u_{x}
\end{cases}
\end{equation}
with variable diffusion coefficient $c(x)=A(x+B)^k$ or $c(x)=Ae^{k x},$ where $A,$ $B$ and $k$ are real constants. Specifically, if we transform \eqref{eq4} and \eqref{eq5} by scaling with the similarity variable $z=(x+B)^{\frac{s}{\al}}t$ (where $s$ are suitably chosen real numbers) in the case $c(x)=A(x+B)^k$ and with the similarity variable $z=e^{\frac{k}{\al}x}t$ in the case $c(x)=Ae^{k x},$ then we obtain \eqref{eq1} and \eqref{eq2}, where the constants $a,$ $b,$ $c,$ $a_1,$ $a_2,$ $b_1$ and $b_2$ are expressed in terms of $\al$, $A,$ $B$ and $k$. Thus, we can obtain  exact invariant solutions to \eqref{eq4} and \eqref{eq5} by obtaining exact solutions to \eqref{eq1} and \eqref{eq2}, respectively. Symmetry reductions of time fractional diffusion-wave equations and systems with variable diffusion coefficients and exact invariant solutions, which can be obtained using the results of the present paper, appear in works by the present authors \cite{our1,our2}.

In the following section, we briefly introduce the special functions that are used to express the solutions of \eqref{eq6} and \eqref{eq7}. In Section 3, we present the solutions of \eqref{eq1} and \eqref{eq2} in three propositions along with several technical lemmas and corollaries. The results obtained there hint at a general pattern for the solutions to \eqref{eq6} and \eqref{eq7}. In Section 4, we explicitly identify this pattern and derive exact solutions to \eqref{eq6} and \eqref{eq7} using the roots of the characteristic polynomials of the right-hand sides of \eqref{eq6} and \eqref{eq7} in  analogy to the well-known method of solving Cauchy-Euler differential equations. In Section 5, we derive exact solutions of diffusion-wave equations and systems with variable coefficients using the results obtained in Section 3 and show that our solutions correspond to known solutions in some cases. 
\section{Preliminaries}\label{sec:2}
\setcounter{section}{2}
\setcounter{equation}{0}
\setcounter{theorem}{0}
We express the solutions of \eqref{eq1} and \eqref{eq2} in terms of Mittag-Leffler functions, generalized Wright functions and Fox H-functions, which are defined as follows:
\begin{itemize}
\item The Mittag-Leffler function, 
\begin{equation*}
E_{\al,\beta}(z)=\sum_{i=0}^\infty\frac{z^i}{\Gamma(\al i+\beta)},
\end{equation*}
is defined for $z\in\mathbb{C}$ and for $\al\in\mathbb{R}_+$ and $\beta\in\mathbb{R}$ \cite{MLnom}.

\item The Wright function,
\begin{equation*}
\Psi(z;\al,\beta)=\sum_{i=0}^\infty\frac{z^i}{i!\Gamma(\alpha i+\beta)},
\end{equation*}
is an entire function for $z\in\mathbb{C}$ and for real $\al$ satisfying  $\alpha>-1$ and $\beta\in\mathbb{C}$ \cite{luchko99}.

\item The generalized Wright function,
\begin{eqnarray*}
{}_p\Psi_q\left[z\left|\begin{array}{c}
(A_i,\al_i)_{1,p}\\
(B_j,\beta_j)_{1,q}
\end{array}\right.\right] & = & \sum_{k=0}^\infty\frac{\prod\limits_{i=1}^p\Gamma(A_i+\al_i k)}{\prod\limits_{j=1}^q\Gamma(B_j+\beta_j k)}\frac{z^k}{k!},
\end{eqnarray*}
is defined for $z\in\mathbb{C},$ $p,q\in\mathbb{N}_0=\{0,1,2,\ldots\},$ $A_i,B_j\in\mathbb{C}$ and $\al_i, \beta_j\in\mathbb{R}\setminus\{0\}$ $( i=1,\ldots,p; j=1,\ldots,q).$ The generalized Wright function is absolutely convergent, and thus it is an entire function for $\Delta=
\sum\limits_{j=1}^q\beta_j-\sum\limits_{i=1}^p\alpha_i>-1$ \cite{kilb2005}.

\item The Fox H-function,
\begin{equation*}
H_{p,q}^{m,l}\left[z\biggr\vert\begin{array}{c}
(A_i, \al_i)_{1,p}\\
(B_j, \beta_j)_{1,q}
\end{array}\right]=\frac{1}{2\pi i}\int_{L}\frac{\prod\limits_{j=1}^m\Gamma(B_j-\beta_js)\prod\limits_{i=1}^l\Gamma(1-A_i+\al_is)}{\prod\limits_{i=l+1}^p\Gamma(A_i-\al_is)\prod\limits_{j=m+1}^q\Gamma(1-B_j+\beta_js)}z^{s}d s,
\end{equation*}
is defined for $z\in\mathbb{C}\setminus\{0\},$ $m,l,p,q\in\mathbb{N}_0$ with $(m,l)\neq (0,0),$ $\al_i, \beta_j\in\mathbb{R}_+$ and $A_i,B_j\in\mathbb{R}$ $(i=1,\ldots, p;j=1,\ldots,q)$. The above product is understood to be $1,$
if the number of factors in the product is zero. The contour $L$ separates the poles of the gamma functions  $\Gamma(B_j-\beta_j s)$ $(j=1,\ldots,m)$ from the poles of the gamma functions $\Gamma(1-A_i+\al_i s)$ $(i=1,\ldots,l)$. In this work, we take $L$ as $L_{\gamma+i\infty},$ a contour that extends from the point $\gamma-i\infty$ to the point $\gamma+i\infty,$ where $\gamma$ is chosen such that $L$ separates the poles as stated above. The above integral converges under the conditions \cite{Hnom}
\begin{equation*}
\mu=\sum_{i=1}^{l}\al_i-\sum_{i=l+1}^{p}\al_i+\sum_{j=1}^{m}\beta_j-\sum_{j=m+1}^{q}\beta_j>0\mbox{ and } \left\vert\arg z\right\vert<\frac{\pi\mu}{2}.
\end{equation*}
\end{itemize}
With regard to expressions for solutions to \eqref{eq6} and \eqref{eq7}, we are particularly interested in the case $l=0$ of the H-function. In this case, the H-function vanishes  exponentially for large $z$ as \cite{math}
\begin{equation}\label{eq8}
H_{p,q}^{m,0}[z]\approx O\left(\exp\left(-\nu z^{\frac{1}{\nu}}\epsilon^{\frac{1}{\nu}}\right)z^{\frac{2\delta+1}{2\nu}}\right),
\end{equation}
where $\epsilon=\prod_{i=1}^p(\al_i)^{\al_i}\prod_{j=1}^q(\beta_j)^{-\beta_j},$ $\delta=\sum_{j=1}^q B_j-\sum_{i=1}^p A_i+\frac{p-q}{2}$  and
\begin{equation*}
\nu=\sum_{j=1}^{q}\beta_j-\sum_{i=1}^{p}\al_i>0.
\end{equation*}
The following identities for H-functions are known to hold for $z>0$ \cite{Hnom}:
\begin{eqnarray}
H^{m,l}_{p,q}\left[z\biggr\vert\begin{array}{c}
(A_i,\al_i)_{1,p}\\
(B_j,\beta_j)_{1,q}
\end{array}\right] & = & H^{l,m}_{q,p}\left[\frac{1}{z}\biggr\vert\begin{array}{c}
(1-B_j,\beta_j)_{1,q}\\
(1-A_i,\al_i)_{1,p}
\end{array}\right],\label{eq9}\\
H^{m,l}_{p,q}\left[z\biggr\vert\begin{array}{c}
(A_i,\al_i)_{1,p}\\
(B_j,\beta_j)_{1,q}
\end{array}\right] & = & k H^{m,l}_{p,q}\left[z^k\biggr\vert\begin{array}{c}
(A_i,k\al_i)_{1,p}\\
(B_j,k\beta_j)_{1,q}
\end{array}\right]\mbox{ for } k>0,\label{eq10}\\
z^\sigma H^{m,l}_{p,q}\left[z\biggr\vert\begin{array}{c}
(A_i,\al_i)_{1,p}\\
(B_j,\beta_j)_{1,q}
\end{array}\right] & = & H^{m,l}_{p,q}\left[z\biggr\vert\begin{array}{c}
(A_i+\sigma\al_i,\al_i)_{1,p}\\
(B_j+\sigma\beta_j,\beta_j)_{1,q}
\end{array}\right]\mbox{ for } \sigma\in\mathbb{C}.\label{eq11}
\end{eqnarray}
The following identity is also known \cite{Hnom}:
\begin{multline}\label{eq12}
\frac{d^N}{d z^N}\left(z^{\rho-1}H^{m,l}_{p,q}\left[a z^\sigma\biggr\vert\begin{array}{c}
(A_i,\al_i)_{1,p}\\
(B_j,\beta_j)_{1,q}
\end{array}\right]\right)\\
=z^{\rho-N-1}H^{m,l+1}_{p+1,q+1}\left[a z^\sigma\biggr\vert\begin{array}{c}
(1-\rho,\sigma), (A_i,\al_i)_{1,p}\\
(B_j,\beta_j)_{1,q}, (1-\rho+N,\sigma) 
\end{array}\right],
\end{multline}
where $N\in \mathbb{N},~ a,\rho,\sigma\in\mathbb{C}$ and $\Re(\sigma)>0.$ 

Moreover, the following relations hold among the above special functions. The Mittag-Leffler functions and Wright functions can be expressed in terms of generalized Wright functions as
\begin{equation*}
E_{\al,\beta}(z) = {}_1\Psi_1\left[z\left|\begin{array}{c}
(1,1)\\
(\beta,\al)
\end{array}\right.
\right]
\end{equation*}
and
\begin{equation}\label{eq13}
\Psi\left(z;\al,\beta\right) = {}_0\Psi_{1}\left[z\left|\begin{array}{c}
-\\
(\beta,\al)
\end{array}\right.\right].
\end{equation}
Also, the generalized Wright functions can be expressed in terms of Fox H-functions as
\begin{multline}\label{eq14}
{}_p \Psi_q\left[z\biggr\vert\begin{array}{c}
(A_i,\al_i)_{1,p}\\
(B_j,\beta_j)_{1,q}
\end{array}\right]\\
=\begin{cases}
H_{p,q+1}^{1,p}\left[-z\biggr\vert\begin{array}{c}
(1-A_i,\al_i)_{1,p}\\
(0,1), (1-B_1,\beta_1),(1-B_j,\beta_j)_{2,q}
\end{array}\right],\mbox{ for } \beta_1>0\\
H_{p+1,q}^{1,p}\left[-z\biggr\vert\begin{array}{c}
(1-A_i,\al_i)_{1,p},(B_1,-\beta_1)\\
(0,1), (1-B_j,\beta_j)_{2,q}
\end{array}\right],\mbox{ for } -1<\beta_1<0,
\end{cases}
\end{multline}
where $\al_i$ $(i=1,\dots, p)$ and $\beta_j$ $(j=2,\dots,q)$ are positive real numbers \cite{main07}.

In the next section, we present exact solutions of \eqref{eq1} and \eqref{eq2} in propositions using the aforementioned special functions. We employ two approaches to prove the propositions. One is rather simple, following a term-by-term fractional differentiation of the power series representation of the Mittag-Leffler function and the generalized Wright function by using the following formula of the Riemann-Liouville derivative of power functions \cite{Pod}:
\begin{equation}\label{eq15}
\frac{d^\al}{d z^\al}z^\beta=\frac{\Gamma(1+\beta)}{\Gamma(1+\beta-\al)}z^{\beta-\al} \mbox{ for } \al>0\mbox{ and }\beta>-1.
\end{equation}
Using the fact that if the series $\sum f_j$ and the series $\sum \frac{d^\al f_j}{d z^\al}$ converge uniformly, we can take fractional derivatives term by term of a series
\begin{equation}\label{eq16}
\frac{d^\al}{d z^\al}\sum_{j=0}^\infty f_j=\sum_{j=0}^\infty\frac{d^\al f_j}{d z^\al} \quad (\mbox{with }\al>0)
\end{equation}
in its corresponding convergence region \cite{oldh}. However, if the power of the variable $z$ in power series is less than $-1,$ we cannot apply this approach. Contrastingly, we show that solutions expressed in terms of the Fox H-function, which are defined by the integral of Mellin-Barnes type,  satisfy \eqref{eq1} or \eqref{eq2}.
\section{Construction of exact solutions}\label{sec:3}
\setcounter{section}{3}
\setcounter{equation}{0}
\setcounter{theorem}{0}
As mentioned in Section 1, we express solutions to \eqref{eq1} and \eqref{eq2} using three kinds of special functions: Mittag-Leffler functions, generalized Wright functions and Fox H-functions. Which of these functions we use in  any given case depends on the right-hand side and order of the fractional derivative of \eqref{eq1} or \eqref{eq2}.
\vspace*{-12pt}
\subsection{Solutions expressed in terms of Mittag-Leffler functions}\label{subsec:3.1}
The following lemma concerns fractional derivatives of products of Mittag-Leffler functions and power functions.
\begin{lemma}\label{le1}
For arbitrary positive values of $\al,$ $\beta$ and $B,$ and for any real $a,$ the following equality holds:
\begin{equation*}
\frac{d^\al}{dz^\al}\left(z^{B-1}E_{\beta,B}(az^{\beta})\right)=
a^mz^{B+m\beta-\alpha-1}E_{\beta,B+m\beta-\alpha}(az^{\beta}).
\end{equation*}
Here, $m$ is the smallest non-negative integer such that $B+m\beta -\al-1$ is not a negative integer. 
\end{lemma}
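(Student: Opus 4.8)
The plan is to expand the left-hand side as an explicit power series in $z$, apply the power rule \eqref{eq15} term by term, and then recognise the resulting series as a Mittag-Leffler function after discarding the terms that vanish identically. First I would write
\[
z^{B-1}E_{\beta,B}(az^{\beta})=\sum_{i=0}^{\infty}\frac{a^{i}}{\Gamma(\beta i+B)}\,z^{\beta i+B-1}.
\]
Because $\beta>0$ and $B>0$, every exponent obeys $\beta i+B-1\geq B-1>-1$, so each monomial falls in the range where \eqref{eq15} is valid; moreover both this series and the series of its fractional derivatives converge uniformly on compact subsets (the argument $az^{\beta}$ is entire in $z$), so the term-by-term rule \eqref{eq16} applies.

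Applying \eqref{eq15} to $z^{\beta i+B-1}$ introduces the factor $\Gamma(\beta i+B)/\Gamma(\beta i+B-\al)$, which cancels the $\Gamma(\beta i+B)$ already in the denominator and leaves
\[
\frac{d^{\al}}{dz^{\al}}\left(z^{B-1}E_{\beta,B}(az^{\beta})\right)=\sum_{i=0}^{\infty}\frac{a^{i}}{\Gamma(\beta i+B-\al)}\,z^{\beta i+B-\al-1}.
\]
The decisive observation is that $1/\Gamma$ vanishes exactly at the non-positive integers: whenever $\beta i+B-\al$ is a non-positive integer, equivalently whenever $\beta i+B-\al-1$ is a negative integer, that term drops out (this is already encoded in \eqref{eq15}, whose coefficient is zero at a pole of the denominator $\Gamma$). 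By the definition of $m$ as the smallest non-negative integer for which $B+m\beta-\al-1$ is not a negative integer, minimality forces $B+i\beta-\al-1$ to be a negative integer for every $i=0,1,\dots,m-1$, so precisely those leading terms vanish and the summation effectively begins at $i=m$.

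It then remains to reindex. Substituting $i=m+j$ and pulling out $a^{m}z^{m\beta}$ regroups the surviving tail into $a^{m}z^{B+m\beta-\al-1}\sum_{j=0}^{\infty}(az^{\beta})^{j}/\Gamma(\beta j+B+m\beta-\al)$, which is exactly the claimed $a^{m}z^{B+m\beta-\al-1}E_{\beta,B+m\beta-\al}(az^{\beta})$. I would also record at the start that $m$ is well defined and finite: since $\beta>0$, the quantity $B+i\beta-\al-1$ is strictly increasing in $i$ and eventually positive, hence a negative integer for only finitely many $i$. The sole point demanding care is the bookkeeping of which leading terms vanish and the matching of the index range of the surviving series to the definition of $m$; there is no genuine analytic obstacle, as the convergence required for term-by-term differentiation is furnished by \eqref{eq16}.
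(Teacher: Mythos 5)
Your proof is correct and follows exactly the route the paper takes: the paper disposes of Lemma~\ref{le1} with the one-line remark that it can be proven by term-by-term fractional differentiation of the series representation of the Mittag-Leffler function, which is precisely your argument via \eqref{eq15} and \eqref{eq16}. Your added bookkeeping---that minimality of $m$ forces the terms $i=0,\dots,m-1$ to vanish because $1/\Gamma$ is zero at the non-positive integers, and that $m$ is well defined and finite since $B+i\beta-\al-1$ is strictly increasing in $i$---simply makes explicit the details the paper leaves implicit.
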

This lemma can be proven straightforwardly by taking the fractional derivative  term by term in a series representation of the Mittag-Leffler function. For the cases specified below, we construct solutions of \eqref{eq1} and \eqref{eq2} in terms of Mittag-Leffler functions.
\begin{proposition}\label{le2}
For arbitrary $\al>0$, we have the following solutions expressed in terms of Mittag-Leffler functions.
 
(1) For $a\in \mathbb{R}$, the equation 
\begin{equation}\label{eq17}
\frac{d^\al\varphi}{d z^\al}=a\varphi,\quad z\in\mathbb{R}
\end{equation}
has a solution $\varphi(z)=\sum\limits_{k=1}^n c_kz^{\alpha-k} E_{\alpha,1+\al-k}(az^{\alpha}),$ where $c_k$ $(k=1,\ldots,n)$ are constants.

(2) For $a_1,a_2\in \mathbb{R}$, the system
\begin{equation}\label{eq18}
\begin{cases}
\frac{d^\al\varphi}{d z^\al}=a_1\psi,\\
\frac{d^\al\psi}{d z^\al}=a_2\varphi,
\end{cases}\quad z\in\mathbb{R}
\end{equation}
has a solution 
\begin{equation*}
\begin{cases}
\varphi(z) = \sum\limits_{k=1}^n c_{k,1}z^{\alpha-k} E_{
2\alpha,1+\al-k}(a_1a_2z^{2\alpha})
+a_1\sum\limits_{k=1}^nc_{k,2}z^{2\alpha-k} E_{2\alpha,1+2\al-k}(a_1a_2z^{2\alpha}),\\
\psi(z) = a_2\sum\limits_{k=1}^n c_{k,1}z^{2\alpha-k} E_{
2\alpha,1+2\al-k}(a_1a_2z^{2\alpha})+\sum\limits_{k=1}^nc_{k,2}z^{\alpha-k} E_{
2\alpha,1+\al-k}
(a_1a_2z^{2\alpha}),
\end{cases}
\end{equation*}
where $c_{k,1},c_{k,2}$ ($k=1,\ldots,n$) are constants.
\end{proposition}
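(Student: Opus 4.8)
The plan is to prove both parts by a single mechanism: apply Lemma~\ref{le1} to each individual summand, read off the resulting Mittag-Leffler function, and then recombine linearly. Two standing facts make this work. First, the hypothesis $n-1<\al<n$ forces $\al$ to be non-integral and gives $1+\al-k>0$ for every $1\le k\le n$, so each base function has the form $z^{B-1}E_{\beta,B}(\cdot)$ with $B>0$ and Lemma~\ref{le1} is applicable. Second, all exponents of $z$ occurring below exceed $-1$ (the smallest is $\al-k\ge\al-n>-1$), so \eqref{eq15} and \eqref{eq16} justify the term-by-term fractional differentiation on the region of convergence.

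For part~(1) I would apply Lemma~\ref{le1} with $\beta=\al$ and $B=1+\al-k$ to the $k$-th summand $z^{\al-k}E_{\al,1+\al-k}(az^{\al})$. The exponent appearing in the lemma is $B+m\beta-\al-1=m\al-k$. At $m=0$ this is the negative integer $-k$, which is excluded; at $m=1$ it equals $\al-k$, which is never an integer since $\al$ is not an integer. Hence the admissible index is $m=1$, and Lemma~\ref{le1} gives
\begin{equation*}
\frac{d^\al}{dz^\al}\left(z^{\al-k}E_{\al,1+\al-k}(az^{\al})\right)=a\,z^{\al-k}E_{\al,1+\al-k}(az^{\al}).
\end{equation*}
Multiplying by $c_k$, summing over $k$, and invoking \eqref{eq16} yields $\frac{d^\al\varphi}{dz^\al}=a\varphi$, which is \eqref{eq17}.

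For part~(2) the structural choice of $E_{2\al,\cdot}$ and of the argument $a_1a_2z^{2\al}$ is motivated by the heuristic that formally applying $d^\al/dz^\al$ twice to \eqref{eq18} decouples it into the scalar equation $d^{2\al}\varphi/dz^{2\al}=a_1a_2\varphi$; since the Riemann--Liouville semigroup law is not available in general, I would instead verify \eqref{eq18} directly. To this end I would isolate the two families
\begin{equation*}
A_k(z)=z^{\al-k}E_{2\al,1+\al-k}(a_1a_2z^{2\al}),\qquad D_k(z)=z^{2\al-k}E_{2\al,1+2\al-k}(a_1a_2z^{2\al}),
\end{equation*}
and compute their order-$\al$ derivatives from Lemma~\ref{le1} with $\beta=2\al$ and $a=a_1a_2$. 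For $A_k$ (so $B=1+\al-k$) the exponent $2m\al-k$ is the excluded integer $-k$ at $m=0$ and equals $2\al-k$ at $m=1$, whence $\frac{d^\al}{dz^\al}A_k=a_1a_2D_k$; for $D_k$ (so $B=1+2\al-k$) the exponent at $m=0$ is already the non-integer $\al-k$, whence $\frac{d^\al}{dz^\al}D_k=A_k$. Substituting $\varphi=\sum_k c_{k,1}A_k+a_1\sum_k c_{k,2}D_k$ and $\psi=a_2\sum_k c_{k,1}D_k+\sum_k c_{k,2}A_k$, applying these two identities term by term via \eqref{eq16}, and collecting the factors of $a_1$ and $a_2$ then reproduces exactly $\frac{d^\al\varphi}{dz^\al}=a_1\psi$ and $\frac{d^\al\psi}{dz^\al}=a_2\varphi$.

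The step I expect to require the most care is the bookkeeping of the integer $m$ in each invocation of Lemma~\ref{le1}, i.e.\ confirming that neither $\al-k$ nor $2\al-k$ is a negative integer for any $1\le k\le n$. The former is immediate from $\al\notin\mathbb{Z}$. For the latter, a negative-integer value of $2\al-k$ would force $2\al$ to be an integer strictly smaller than $k\le n$, which contradicts $2\al>2(n-1)$ once $n\ge 2$; in the remaining case $n=1$ one has $k=1$ and $2\al-1\in(-1,1)$, an interval containing no negative integer. With this verified, both parts reduce to linearity and the derivative identities above.
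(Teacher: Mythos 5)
Your proposal is correct and takes essentially the same route as the paper: reduce by linearity to the individual summands and apply Lemma~\ref{le1} to each, which the paper dispatches with ``this is easily done using Lemma~\ref{le1}.'' Your extra bookkeeping — verifying that the admissible index is $m=1$ for the families $z^{\al-k}E_{\al,1+\al-k}$ and $A_k$, and $m=0$ for $D_k$, because $\al-k$ and $2\al-k$ are never negative integers under $n-1<\al<n$ — simply makes explicit the details the paper leaves to the reader.
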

\begin{proof}
From the linearity of \eqref{eq17} and \eqref{eq18}, it is sufficient to show that the single terms
\begin{equation*}
\varphi_k(z)=c_kz^{\al-k} E_{\al,1+\al-k}(az^\al)
\end{equation*}
and
\begin{equation*}
\begin{cases}
\varphi_k(z) =  z^{\alpha-k} E_{
2\alpha,1+\al-k}(a_1a_2z^{2\alpha})\\
\psi_k(z) = a_2z^{2\alpha-k} E_{
2\alpha,1+2\al-k}(a_1a_2z^{2\alpha})
\end{cases}
\end{equation*}
satisfy \eqref{eq17} and \eqref{eq18}, respectively, for $k=1,\dots,n$.  This is easily done using Lemma~\ref{le1}.
\end{proof}
Although the first assertion of Proposition~\ref{le2} was demonsrated in \cite{Pod,Kilbas}, we included it here for completeness.
\vspace*{-12pt}
\subsection{Solutions expressed in terms of generalized Wright functions}\label{sec:3.2}
Let us formulate the following contiguous relations for the generalized Wright functions. These are used below to obtain solutions of \eqref{eq1} and \eqref{eq2}.
\begin{lemma}\label{le3} 
Let us assume that the generalized Wright function is absolutely convergent, i.e., that $\Delta=\sum_{j=1}^q B_j-\sum_{i=1}^p A_i>-1.$ Then the following equalities hold for $\al \in\mathbb{R}_+$ and $a\in\mathbb{R}.$

(1) If $\beta_1>0$ and $B_1>0$, then  we have
\begin{multline*}
  \frac{d^\al}{d z^\al}\left(z^{B_1-1}{}_p\Psi_q\left[a z^{\beta_1}\biggr\vert\begin{array}{c}
(A_i,\al_i)_{1,p}\\
(B_j,\beta_j)_{1,q}
\end{array}\right]\right)
 = a^m z^{B_1+m\beta_1-1-\al}\\
 \times{}_{p+1}\Psi_{q+1}\left[az^{\beta_1}\biggr\vert\begin{array}{c}
(1,1), (A_i+m\al_i,\al_i)_{1,p}\\
(1+m,1), (B_1+m\beta_1-\al,\beta_1), (B_j+m\beta_j,\beta_j)_{2,q}
\end{array}\right],\quad z\in \mathbb{R},
\end{multline*}
where $m$ is the smallest non-negative integer such that $B_1+m\beta_1 -\al-1$ is not a negative integer.

(2) For $\sigma\in\mathbb{R}\setminus\{0\}$ and $R\in\mathbb{R},$ the following equality holds
\begin{multline*}
\left(\frac{1}{\al}z\frac{d}{d z}+R\right)\left(z^{\frac{A_1\sigma}{\alpha_1}-\alpha R}{}_p\Psi_q\left[az^{\sigma}\biggr\vert\begin{array}{c}
(A_i,\al_i)_{1,p}\\
(B_j,\beta_j)_{1,q}
\end{array}\right]\right)
\\=
\frac{\sigma}{\alpha_1\al} z^{\frac{A_1\sigma}{\alpha_1}-\alpha R}{}_p\Psi_q\left[az^{\sigma}\biggr\vert\begin{array}{c}
(A_1+1,\al_1), (A_i,\al_i)_{2,p}\\
(B_j,\beta_j)_{1,q}
\end{array}\right].
\end{multline*}
\end{lemma}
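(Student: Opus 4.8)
The plan for both parts is to expand the generalized Wright function as its defining power series and to operate term by term, justifying the interchange of the differential operator with the summation by the uniform-convergence criterion \eqref{eq16} together with the absolute convergence of the generalized Wright function guaranteed by $\Delta>-1$.

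For part (1), I would first write
\[
z^{B_1-1}{}_p\Psi_q[az^{\beta_1}]=\sum_{k=0}^\infty\frac{\prod_{i=1}^p\Gamma(A_i+\al_i k)}{\prod_{j=1}^q\Gamma(B_j+\beta_j k)}\frac{a^k}{k!}\,z^{B_1+\beta_1 k-1},
\]
and apply the Riemann--Liouville derivative of power functions \eqref{eq15} to each monomial; the hypotheses $B_1>0$ and $\beta_1>0$ ensure $B_1+\beta_1 k-1>-1$ for every $k$, so \eqref{eq15} applies with $\beta=B_1+\beta_1 k-1$ and produces the factor $\Gamma(B_1+\beta_1 k)/\Gamma(B_1+\beta_1 k-\al)$. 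The factor $\Gamma(B_1+\beta_1 k)$ cancels the $j=1$ gamma function in the denominator, leaving $\Gamma(B_1+\beta_1 k-\al)$ in its place. The crucial observation is that, by the definition of $m$, for every $k<m$ the quantity $B_1+k\beta_1-\al$ is a non-positive integer, so $\Gamma(B_1+k\beta_1-\al)$ has a pole and the corresponding term vanishes, while the $k=m$ term survives. I would then re-index by $k=j+m$ and factor out $a^m z^{B_1+m\beta_1-\al-1}$. The shifted arguments turn $A_i$ into $A_i+m\al_i$ and $B_j$ into $B_j+m\beta_j$ (the $j=1$ slot additionally carrying $-\al$), and the residual factorial is absorbed by the identity $1/(j+m)!=\Gamma(1+j)/(\Gamma(1+m+j)\,j!)$, which supplies exactly the new numerator pair $(1,1)$ and denominator pair $(1+m,1)$. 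Collecting these yields the asserted ${}_{p+1}\Psi_{q+1}$ series.

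For part (2) the computation is more direct. Writing $C=\tfrac{A_1\sigma}{\al_1}-\al R$ for the exponent and expanding as above, the Euler operator acts diagonally: $\tfrac1\al z\tfrac{d}{dz}$ multiplies the $k$-th monomial $z^{C+\sigma k}$ by $(C+\sigma k)/\al$, and adding $R$ gives the scalar $(C+\sigma k+\al R)/\al$. The particular choice of $C$ is exactly what forces $C+\al R=\tfrac{A_1\sigma}{\al_1}$, so this scalar collapses to $\tfrac{\sigma}{\al\al_1}(A_1+\al_1 k)$. Absorbing $A_1+\al_1 k$ into the leading gamma function through $(A_1+\al_1 k)\Gamma(A_1+\al_1 k)=\Gamma(A_1+1+\al_1 k)$ raises $A_1$ to $A_1+1$ and leaves the pulled-out constant $\tfrac{\sigma}{\al\al_1}$, giving the stated identity.

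The main obstacle is the bookkeeping in part (1): one must argue carefully that the leading $m$ terms genuinely drop out, which is precisely where the definition of $m$ via the negative-integer condition is used, and then verify that the re-indexing combined with the factorial-to-gamma rewriting reproduces the correct top and bottom parameter pairs. Part (2) presents no comparable difficulty once the role of the exponent $C$ is recognized, the remaining work being a routine simplification of the scalar coefficient.
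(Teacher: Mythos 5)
Your proposal is correct and takes essentially the same approach as the paper: term-by-term application of the power rule \eqref{eq15}, with the definition of $m$ annihilating the first $m$ terms through poles of the gamma function $\Gamma(B_1+k\beta_1-\alpha)$, followed by the re-indexing $k=j+m$ and the rewriting $1/(j+m)!=\Gamma(1+j)/(\Gamma(1+m+j)\,j!)$ that produces the parameter pairs $(1,1)$ and $(1+m,1)$. For part (2), your diagonal action of the Euler operator combined with $(A_1+\alpha_1 k)\Gamma(A_1+\alpha_1 k)=\Gamma(A_1+1+\alpha_1 k)$ is precisely the paper's argument.
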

\begin{proof}
To prove the first assertion, let us write the function whose fractional derivative we are taking, $\varphi(z),$  as 
\begin{equation*}
\varphi(z)=z^{B_1-1}{}_p\Psi_q\left[a z^{\beta_1}\biggr\vert\begin{array}{c}
(A_i,\al_i)_{1,p}\\
(B_j,\beta_j)_{1,q}
\end{array}\right].
\end{equation*}
Then, taking the Riemann-Liouville derivative using \eqref{eq15}, we obtain
\begin{equation*}
\frac{d^\al\varphi}{d z^\al} = \sum_{i=0}^\infty\frac{\Gamma(A_1+\al_1 i)\cdots\Gamma(A_p+\al_p i)a^{i}}{\Gamma(B_1-\al+\beta_1 i)\Gamma(B_2+\beta_2 i)\cdots\Gamma(B_q+\beta_q i)i!}z^{B_1-1-\alpha+\beta_1i},
\end{equation*}
which equals
\begin{equation*}
\frac{d^\al\varphi}{d z^\al} = \sum_{i=0}^\infty\frac{\Gamma(A_1+\al_1 (i+m))\cdots\Gamma(A_p+\al_p (i+m))a^{i+m}z^{B_1-1-\al+\beta_1 i+\beta_1 m}}{\Gamma(B_1-\al+\beta_1 m+\beta_1 i)\Gamma(B_2+\beta_2 m+\beta_2 i)\cdots\Gamma(B_q+\beta_q m+\beta_q i)(i+m)!},
\end{equation*}
where we have employed the condition on the integer $m$. The first assertion of the lemma can be proved through multiplying both the numerator and the denominator of the last expression by $i!.$ 

The second assertion of the lemma can be proved through straightforward calculation with the help of the identity $(A_1+\al_1 i)\Gamma(A_1+\al_1 i)=\Gamma(A_1+1+\al_1 i)$ for $i=0,1,2,\dots$.
\end{proof}
We obtain the following corollary from the first assertion of Lemma \ref{le3} in the case $A_1=\al_1=1.$
\begin{corollary}\label{cor1}
When $B_1>0,$ $\beta_1>0$ and $A_1=\al_1=1,$ the fractional derivative of product of power function and generalized Wright function ${}_p\Psi_q$ is 
\begin{eqnarray*}
 & & \frac{d^\al}{d z^\al}\left(z^{B_1-1}{}_p\Psi_q\left[a z^{\beta_1}\biggr\vert\begin{array}{c}
(1,1), (A_i,\al_i)_{2,p}\\
(B_j,\beta_j)_{1,q}
\end{array}\right]\right)\\
& = & a^m z^{B_1+m\beta_1-1-\al}{}_{p}\Psi_{q}\left[az^{\beta_1}\biggr\vert\begin{array}{c}
(1,1), (A_i+m\al_i,\al_i)_{2,p}\\
(B_1+m\beta_1-\al,\beta_1), (B_j+m\beta_j,\beta_j)_{2,q}
\end{array}\right],
\end{eqnarray*}
where $m$ is the smallest non-negative integer such that $B_1+m\beta_1 -\al-1$ is not a negative integer. 
\end{corollary}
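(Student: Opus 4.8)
The plan is to specialize the first assertion of Lemma~\ref{le3} to the case $A_1=\al_1=1$ and then to collapse the resulting generalized Wright function of order $(p+1,q+1)$ back down to order $(p,q)$ by cancelling a matching pair of parameters that appears in both the numerator and denominator lists. No new analysis is needed; everything follows from the series definition of ${}_p\Psi_q$.

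First I would substitute $A_1=\al_1=1$ directly into the conclusion of Lemma~\ref{le3}(1). Under this substitution the leading top parameter $(A_1+m\al_1,\al_1)$ becomes $(1+m,1)$, so the numerator parameter list of the output ${}_{p+1}\Psi_{q+1}$ reads $(1,1),\,(1+m,1),\,(A_i+m\al_i,\al_i)_{2,p}$, while its denominator list reads $(1+m,1),\,(B_1+m\beta_1-\al,\beta_1),\,(B_j+m\beta_j,\beta_j)_{2,q}$. The key observation is that the pair $(1+m,1)$ now occurs simultaneously in both lists. Returning to the series definition of the generalized Wright function, this pair contributes the factor $\Gamma(1+m+k)$ to both the numerator $\prod_{i}\Gamma(A_i+\al_i k)$ and the denominator $\prod_{j}\Gamma(B_j+\beta_j k)$ of the $k$th term, and these factors cancel identically for every $k\ge 0$. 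Cancelling them term by term lowers both orders by one and leaves precisely ${}_p\Psi_q$ with top parameters $(1,1),\,(A_i+m\al_i,\al_i)_{2,p}$ and bottom parameters $(B_1+m\beta_1-\al,\beta_1),\,(B_j+m\beta_j,\beta_j)_{2,q}$, which is exactly the claimed right-hand side.

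Finally I would record that this cancellation is legitimate: since the pair $(1+m,1)$ is removed from both the numerator and the denominator, the quantity $\Delta$ governing absolute convergence is unchanged, so the reduced series converges on the same region and the term-by-term identification is justified under the hypothesis $\Delta>-1$ inherited from Lemma~\ref{le3}. The only real point of care is the bookkeeping, namely verifying that after the substitution $A_1=\al_1=1$ the duplicated pair is exactly $(1+m,1)$ and that the surviving parameters match the statement, rather than any analytic difficulty.
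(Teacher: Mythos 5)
Your proposal is correct and takes exactly the paper's route: the paper obtains the corollary simply by specializing the first assertion of Lemma~\ref{le3} to $A_1=\al_1=1$, whereupon the duplicated parameter pair $(1+m,1)$ cancels between the numerator and denominator lists, reducing ${}_{p+1}\Psi_{q+1}$ to ${}_p\Psi_q$. Your explicit term-by-term cancellation of the factors $\Gamma(1+m+k)$, together with the observation that $\Delta$ is unchanged, just spells out what the paper leaves implicit.
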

Before moving on to the formulation of exact solutions of \eqref{eq1} and \eqref{eq2}, we introduce the following notation.

Let us consider the case that $c=0$ and $b\neq 0$ in \eqref{eq1}. Then writing $-\frac{a}{b}$ as $\bar{s},$ we can rewrite the right-hand side of \eqref{eq1} as
\begin{equation*}
a\varphi+\frac{b}{\al}z\frac{d\varphi}{dz}=b\left(\frac{1}{\al}z\frac{d}{dz}-\bar{s}\right)\varphi.
\end{equation*}
Then, in the case of \eqref{eq2}, assuming $b_1b_2\neq 0$ and introducing the quantities 
\begin{equation*}
\tilde{s}_1=-\frac{a_1}{b_1},\quad \tilde{s}_2=-\frac{a_2}{b_2},
\end{equation*}
we rewrite the right-hand side of \eqref{eq2} as follows:
\begin{eqnarray*}
a_1\psi+\frac{b_1}{\al}z\frac{d\psi}{d z}=b_1\left(\frac{1}{\al}z\frac{d}{dz}-\tilde{s}_1\right)\psi,\\
a_2\varphi+\frac{b_2}{\al}z\frac{d\varphi}{d z}=b_2\left(\frac{1}{\al}z\frac{d}{dz}-\tilde{s}_2\right)\varphi.
\end{eqnarray*}
Now, let us assume $c\neq 0.$ Then the characteristic equation of the right-hand side of \eqref{eq1} is
\begin{equation}\label{eq19}
s^2+\left(\frac{b}{c}-\frac{1}{\alpha}\right)s+\frac{a}{c}=0.
\end{equation}
We write the determinant and roots of \eqref{eq19} as $D=\frac{1}{\al^2}-\frac{2b}{\al c}+\frac{b^2}{c^2}-\frac{4a}{c}$  and  $s_{1,2}=\frac{1}{2}\left(\frac{1}{\al}-\frac{b}{c}\pm\sqrt{D}\right),$ respectively.
Then we can rewrite the right-hand side of \eqref{eq1} in the factorized differential form 
\begin{equation}\label{eq20}
a\varphi+\frac{b}{\al}z\frac{d\varphi}{dz}+\frac{c}{\al^2}z^2\frac{d^2\varphi}{d z^2}=c\left(\frac{1}{\al}z\frac{d}{dz}-s_1\right)\left(\frac{1}{\al}z\frac{d}{dz}-s_2\right)\varphi.
\end{equation}
This notation is useful for at least two reasons. First, it reveals the uniformity in given solutions of \eqref{eq1} and \eqref{eq2} with different orders of fractional derivatives. In particular, in the case $c\neq 0,$ we can avoid a tedious computation by simply rewriting the right-hand side of \eqref{eq1} in factorized operator form. Second, using this notation, we can easily generalize \eqref{eq1} and \eqref{eq2} into cases with higher-order derivatives and obtain solutions thereof. We will discuss this generalization in the next section. 

We now formulate the solutions of \eqref{eq1} and \eqref{eq2} as follows.
\begin{proposition}\label{le4}
We have the following solutions expressed in terms of the generalized Wright function.

(1) For $\al>1$ and $a,b\in \mathbb{R}$ with $b\neq 0,$ the equation
\begin{equation}\label{eq21}
\frac{d^\al\varphi}{d z^\al}=a\varphi+\frac{b}{\al}z\frac{d\varphi}{dz},\quad z\in\mathbb{R}
\end{equation}
has as a solution 
\begin{equation*}
\varphi(z)=\sum\limits_{k=1}^n c_k z^{\alpha-k} {}_2\Psi_1\left[bz^{\alpha}\left|\begin{array}{c}
\left(1-\frac{k}{\alpha}-\bar{s},1\right),(1,1)
\\
(1+\al-k,\alpha)
\end{array}\right.\right],
\end{equation*}
where $\bar{s}=-\frac{a}{b},$ and $c_k$ $(k=1,\dots,n)$ are constants.

(2) For $\al>2$ and $a,b,c\in \mathbb{R}$ with $c\neq 0,$ the equation
\begin{equation}\label{eq22}
\frac{d^\al\varphi}{d z^\al} = a\varphi+\frac{b}{\alpha}z \frac{d\varphi}{dz}+\frac{c}{\al^2}z^2\frac{d^2\varphi}{dz^2},\quad z\in\mathbb{R}
\end{equation}
has as a solution 
\begin{equation*}
\varphi(z)=\sum_{k=1}^n c_kz^{\al-k}{}_3\Psi_1\left[cz^\al\left|\begin{array}{c}
          \left(1-\frac{k}{\al}-s_1,1\right),\left(1-\frac{k}{\al}-s_2,1\right),(1,1)\\
          (1+\al-k,\al)
          \end{array}\right.\right],
\end{equation*}
where $s_{1,2}=\frac{1}{2}\left(\frac{1}{\al}-\frac{b}{c}\pm\sqrt{D}\right)$, $D=\frac{1}{\al^2}-\frac{2b}{\al c}+\frac{b^2}{c^2}-\frac{4a}{c}$, and $c_k$ $(k=1,\ldots,n)$ are constants.

(3) For $\al>1$ and $a_1,a_2,b_1,b_2\in \mathbb{R}$ with $b_1b_2\neq 0,$ the system 
\begin{equation}\label{eq23}
\begin{cases}
\frac{d^\al\varphi}{d z^\al} = a_1\psi+\frac{b_1}{\alpha}z \frac{d\psi}{dz},\\
\frac{d^\al\psi}{d z^\al} = a_2\varphi+\frac{b_2}{\alpha} z\frac{d\varphi}{dz},
\end{cases}\quad z\in\mathbb{R}
\end{equation}
has as a solution 
\begin{align*}
\varphi(z)=&\sum_{k=1}^{n}c_{k,1}z^{\alpha-k} {}_3\Psi_1\left[4b_1 b_2 z^{2\al}\biggr\vert\begin{array}{c}
\left(1-\frac{k}{2\al}-\frac{\tilde{s}_1}{2},1\right),\left(\frac{1}{2}-\frac{k}{2\al}-\frac{\tilde{s}_2}{2},1\right),(1,1)\\
(1+\al-k,2\alpha)
\end{array}\right]\\
 & +2b_1\sum_{k=1}^n c_{k,2}z^{2\alpha-k} {}_3\Psi_1\left[4b_1 b_2 z^{2\al}\biggr\vert\begin{array}{c}\left(\frac{3}{2}-\frac{k}{2\al}-\frac{\tilde{s}_1}{2},1\right),\left(1-\frac{k}{2\al}-\frac{\tilde{s}_2}{2},1\right),(1,1)\\
(1+2\al-k,2\alpha)
\end{array}\right],\\
\psi(z)=&2b_2\sum_{k=1}^n c_{k,1}z^{2\alpha-k} {}_3\Psi_1\left[4b_1 b_2 z^{2\al}\biggr\vert\begin{array}{c}
\left(1-\frac{k}{2\al}-\frac{\tilde{s}_1}{2},1\right),\left(\frac{3}{2}-\frac{k}{2\al}-\frac{\tilde{s}_2}{2},1\right),(1,1)\\
(1+2\al-k,2\alpha)
\end{array}\right]\\
 & +\sum_{k=1}^n c_{k,2}z^{\alpha-k} {}_3\Psi_1\left[4b_1 b_2 z^{2\al}\biggr\vert\begin{array}{c}
\left(\frac
{1}{2}-\frac{k}{2\al}-\frac{\tilde{s}_1}{2},1\right),\left(1-\frac{k}{2\al}-\frac{\tilde{s}_2}{2},1\right),(1,1)\\
(1+\al-k,2\alpha)
\end{array}\right],
\end{align*}
where $\tilde{s}_1=-\frac{a_1}{b_1},$ $\tilde{s}_2=-\frac{a_2}{b_2},$ and $c_{k,1},$ $c_{k,2}$ $(k=1,\ldots,n)$ are constants.
\end{proposition}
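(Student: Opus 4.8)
The plan is to reduce the statement to the verification of single summands and then to run, for each summand, two complementary one-step calculations---one that produces the left-hand side and one that produces the right-hand side---and check that they coincide. By the linearity of \eqref{eq21}, \eqref{eq22} and \eqref{eq23}, it suffices to show that for each fixed $k$ the individual term $\varphi_k(z)=z^{\al-k}{}_2\Psi_1[\cdots]$ in (1), the term $z^{\al-k}{}_3\Psi_1[\cdots]$ in (2), and the pair $(\varphi_k,\psi_k)$ indexed by $c_{k,1}$ in (3) (together with its mirror image indexed by $c_{k,2}$) solves the corresponding equation. The first preparatory move is to rewrite each right-hand side in the factorized operator form introduced just before the proposition: for (1), $a\varphi+\frac{b}{\al}z\frac{d\varphi}{dz}=b\bigl(\frac{1}{\al}z\frac{d}{dz}-\bar{s}\bigr)\varphi$; for (2), the factorization \eqref{eq20}, namely $c\bigl(\frac{1}{\al}z\frac{d}{dz}-s_1\bigr)\bigl(\frac{1}{\al}z\frac{d}{dz}-s_2\bigr)\varphi$; and for (3) the two operators $b_i\bigl(\frac{1}{\al}z\frac{d}{dz}-\tilde{s}_i\bigr)$.

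The engine of the computation is the pair consisting of Corollary~\ref{cor1} and the second assertion of Lemma~\ref{le3}. I would use Corollary~\ref{cor1} to evaluate the left-hand side $\frac{d^\al}{dz^\al}\varphi_k$: since each summand carries an upper parameter $(1,1)$, the corollary applies and returns a constant times an appropriate power of $z$ times a ${}_p\Psi_q$ in which the remaining upper parameters are raised by $m\al_i$. Simultaneously, I would use Lemma~\ref{le3}(2), which is built so that the operator $\frac{1}{\al}z\frac{d}{dz}+R$, acting on $z^{A_1\sigma/\al_1-\al R}{}_p\Psi_q$, raises the single upper parameter $A_1$ by one and multiplies by $\sigma/(\al_1\al)$. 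The entire point of the prescribed parameter values is that the exponent of $z$ in each $\varphi_k$ equals exactly $A_1\sigma/\al_1-\al R$ with $R=-\bar{s}$ (respectively $-s_j$ or $-\tilde{s}_j$): for example in (1) one checks $(1-\frac{k}{\al}-\bar{s})\al-\al(-\bar{s})=\al-k$, so the lemma fires, and in (2) the operator is simply applied twice, once for $s_2$ and once for $s_1$. Comparing the two outputs---each a constant multiple of the same power of $z$ times a ${}_p\Psi_q$ with one upper parameter advanced by one---and using that the order of the upper parameters is immaterial, the left- and right-hand sides agree, which settles (1) and (2). For (3) the same two-sided calculation is carried out for the $c_{k,1}$-pair, verifying $\frac{d^\al\varphi_k}{dz^\al}=b_1\bigl(\frac{1}{\al}z\frac{d}{dz}-\tilde{s}_1\bigr)\psi_k$ and $\frac{d^\al\psi_k}{dz^\al}=b_2\bigl(\frac{1}{\al}z\frac{d}{dz}-\tilde{s}_2\bigr)\varphi_k$; the $c_{k,2}$-pair then follows by the symmetry $\varphi\leftrightarrow\psi$, $b_1\leftrightarrow b_2$, $\tilde{s}_1\leftrightarrow\tilde{s}_2$.

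The step I expect to be the main obstacle is the correct bookkeeping of the integer $m$ in Corollary~\ref{cor1}, since $m$ is not constant across the three parts. For (1) and (2) one has $B_1=1+\al-k$ and inner exponent $\beta_1=\al$, so $B_1+m\beta_1-\al-1=m\al-k$; the value $m=0$ is excluded (it yields the negative integer $-k$) and $m=1$ is forced (giving $\al-k$, not a negative integer for $n-1<\al<n$ and $1\le k\le n$). In the system, however, the inner exponent is $\beta_1=2\al$ and the two members of each pair behave differently: differentiating $\varphi_k$ (power $\al-k$) again forces $m=1$, whereas differentiating $\psi_k$ (power $2\al-k$) gives $B_1+m\beta_1-\al-1=2\al m+\al-k$, so already $m=0$ is admissible. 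Tracking this $m=1$ versus $m=0$ asymmetry, and checking that the resulting multiplicative constant is reproduced on both sides (it is $1$ in the scalar equations, while in the system the factors $2b_1$, $2b_2$ and $4b_1b_2$ built into the stated solution are exactly what line up), is where the care is needed. Finally, I would remark that the hypotheses $\al>1$ in (1), $\al>2$ in (2) and $\al>1$ in (3) are precisely the conditions under which the relevant generalized Wright functions are entire, i.e.\ $\sum_j\beta_j-\sum_i\al_i>-1$ (here $\al-2>-1$, $\al-3>-1$ and $2\al-3>-1$, respectively); this entirety is what legitimizes the term-by-term fractional differentiation underlying both Corollary~\ref{cor1} and Lemma~\ref{le3}.
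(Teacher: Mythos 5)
Your proposal is correct and follows essentially the same route as the paper's proof: linearity reduces everything to single summands, Corollary~\ref{cor1} evaluates the fractional derivative on the left-hand side, and the factorized operator form \eqref{eq20} together with the second assertion of Lemma~\ref{le3} reproduces the right-hand side, with the $c_{k,2}$-pair in (3) handled by the symmetry of the system. Your explicit bookkeeping of the integer $m$ ($m=1$ forced in (1)--(2) and for $\varphi_{k,1}$, while $m=0$ suffices for $\psi_{k,1}$) matches exactly the values the paper uses, and your closing remark that the hypotheses $\al>1$, $\al>2$, $\al>1$ are precisely the convergence conditions $\Delta>-1$ is a correct (and slightly more explicit) justification than the paper spells out.
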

\proof
The proof  can be carried out similarly in all three cases using Lemma \ref{le3} and Corollary \ref{cor1}. For this reason, we present proofs only for the second and third cases. As in Proposition \ref{le2}, from the linearity of \eqref{eq22}, it is sufficient to show that a single summand,
\begin{equation*}
\varphi_k(z)=z^{\al-k}{}_3\Psi_1\left[cz^\al\left|\begin{array}{c}
          \left(1-\frac{k}{\al}-s_1,1\right),\left(1-\frac{k}{\al}-s_2,1\right),(1,1)\\
          (1+\al-k,\al)
          \end{array}\right.\right],\mbox{ where } 1\leq k\leq n,
\end{equation*}
of the solution $\varphi(z)$ satisfies \eqref{eq22}. Because $1+\al-k>0$ for any $k,$ by Corollary \ref{cor1}, we have the following identity for the left-hand side of \eqref{eq22}:
\begin{equation}\label{eq381}
\frac{d^\al\varphi_k}{d z^\al} = c z^{\al-k}{}_3\Psi_1\left[c z^\al\biggr\vert\begin{array}{c}
(2-\frac{k}{\al}-s_1,1) , (2-\frac{k}{\al}-s_2,1) , (1,1)\\
(1+\al-k,\al)
\end{array}\right].
\end{equation}
Then, by virtue of \eqref{eq20} and the second assertion of Lemma \ref{le3}, the right-hand side of \eqref{eq22} becomes 
\begin{multline*}
c\left(\frac{1}{\al}z\frac{d}{dz}-s_1\right)\left(\frac{1}{\al}z\frac{d}{dz}-s_2\right)\varphi_k\\
=
c\left(\frac{1}{\al}z\frac{d}{dz}-s_1\right)\left( z^{\alpha-k} {}_3\Psi_1\left[cz^{\alpha}\left|\begin{array}{c}
\left(1-\frac{k}{\alpha}-s_1,1\right),\left(2-\frac{k}{\alpha}-s_2,1\right),(1,1)\\
(1+\al-k,\alpha)
\end{array}\right.\right]
\right),
\end{multline*}
which is equal to the left-hand side of \eqref{eq381}.

In the third assertion, we only need to show that the summands
\begin{equation*}
\begin{cases}
\varphi_{k,1}(z)=z^{\alpha-k} {}_3\Psi_1\left[4b_1 b_2 z^{2\al}\biggr\vert\begin{array}{c}
\left(1-\frac{k}{2\al}-\frac{\tilde{s}_1}{2},1\right),\left(\frac{1}{2}-\frac{k}{2\al}-\frac{\tilde{s}_2}{2},1\right),(1,1)\\
(1+\al-k,2\alpha)
\end{array}\right],\\
\psi_{k,1}(z)=2b_2z^{2\alpha-k} {}_3\Psi_1\left[4b_1 b_2 z^{2\al}\biggr\vert\begin{array}{c}
\left(1-\frac{k}{2\al}-\frac{\tilde{s}_1}{2},1\right),\left(\frac{3}{2}-\frac{k}{2\al}-\frac{\tilde{s}_2}{2},1\right),(1,1)\\
(1+2\al-k,2\alpha)
\end{array}\right]
\end{cases}
\end{equation*}
and
\begin{equation*}
\begin{cases}
\varphi_{k,2}(z)=2b_1z^{2\alpha-k} {}_3\Psi_1\left[4b_1 b_2 z^{2\al}\biggr\vert\begin{array}{c}\left(\frac{3}{2}-\frac{k}{2\al}-\frac{\tilde{s}_1}{2},1\right),\left(1-\frac{k}{2\al}-\frac{\tilde{s}_2}{2},1\right),(1,1)\\
(1+2\al-k,2\alpha)
\end{array}\right],\\
\psi_{k,2}(z)=z^{\alpha-k} {}_3\Psi_1\left[4b_1 b_2 z^{2\al}\biggr\vert\begin{array}{c}
\left(\frac
{1}{2}-\frac{k}{2\al}-\frac{\tilde{s}_1}{2},1\right),\left(1-\frac{k}{2\al}-\frac{\tilde{s}_2}{2},1\right),(1,1)\\
(1+\al-k,2\alpha)
\end{array}\right]
\end{cases}
\end{equation*}
satisfy \eqref{eq23} for $k=1,\dots,n$. For this purpose, it is sufficient to show that $(\varphi_{k,1}(z),\psi_{k,1}(z))$ satisfies \eqref{eq23}. Then, the proof that $(\varphi_{k,2}(z),\psi_{k,2}(z))$ satisfies \eqref{eq23} follows from the symmetry property of the system. After applying Corollary \ref{cor1} with $m=1$ for $\varphi_{k,1}$ and $m=0$ for $\psi_{k,1},$ we obtain the following expressions for the left-hand side of \eqref{eq23}:
\begin{equation}\label{eq3_8}
\begin{cases}
\displaystyle{\frac{d^\al\varphi_{k,1}}{dz^\al}}=4b_1b_2z^{2\al-k}{}_3\Psi_1\left[4b_1b_2z^{\al-k}\biggr\vert\begin{array}{c}
(1,1), \left(2-\frac{k}{2\al}-\frac{\tilde{s}_1}{2},1\right), \left(\frac{3}{2}-\frac{k}{2\al}-\frac{\tilde{s}_2}{2},1\right)\\
(1+2\al-k,2\al)
\end{array}\right],\\
\displaystyle{\frac{d^\al\psi_{k,1}}{dz^\al}}=2b_2z^{2\al-k}{}_3\Psi_1\left[4b_1b_2z^{\al-k}\biggr\vert\begin{array}{c}
(1,1), \left(1-\frac{k}{2\al}-\frac{\tilde{s}_1}{2},1\right), \left(\frac{3}{2}-\frac{k}{2\al}-\frac{\tilde{s}_2}{2},1\right)\\
(1+2\al-k,2\al)
\end{array}\right].
\end{cases}
\end{equation}
Finally, applying the second assertion of Lemma \ref{le3} to the right-hand side of \eqref{eq23}, with $R=\frac{a_1}{b_1}$ and  $\sigma=2\al$ for the first equation and $R=\frac{a_2}{b_2}$ and  $\sigma=2\al$ for the second equation, we obtain identically \eqref{eq3_8}.
\proofend
\vspace*{-12pt}
\subsection{Solutions expressed in terms of Fox H-functions}\label{sec3.3}
Let us first present the following technical lemma on the fractional differentiation of Fox H-functions with an argument raised to negative power.
\begin{lemma}\label{le5}
Let $\nu=\sum_{j=1}^{q}\beta_j-\sum_{i=1}^{p}\al_i> 0,$ $\mu=\sum\limits_{j=1}^{m}\beta_j-\sum\limits_{j=m+1}^{q}\beta_j-\sum\limits_{i=1}^{p}\alpha_i>0$, $\alpha\in \mathbb{R}_+$ and $a\in \mathbb{R}\setminus\{0\}$.  Then the following equalities hold.

(1) For $a>0$,
\begin{multline*}
\frac{d^\al}{d z^\al}H_{p,q}^{m,0}\left[az^{-\alpha_p}\biggr\vert\begin{array}{c}
(A_i,\alpha_i)_{1,p-1}, (1, \alpha_p)\\
(B_j,\beta_j)_{1,q}
\end{array}\right]\\
=z^{-\al}H_{p,q}^{m,0}\left[az^{-\alpha_p}\biggr\vert\begin{array}{c}
(A_i,\alpha_i)_{1,p-1}, (1-\al, \alpha_p)\\
(B_j,\beta_j)_{1,q}
\end{array}\right],\quad z>0.
\end{multline*}

(2) If $m\ge 1,$ then
\begin{multline*}
\left(\frac{\beta_1}{\alpha_p}z\frac{d}{dz}+B_1\right)H_{p,q}^{m,0}\left[az^{-\alpha_p}\biggr\vert\begin{array}{c}
(A_i,\alpha_i)_{1,p-1}, (1, \alpha_p)\\
(B_j,\beta_j)_{1,q}
\end{array}\right]\\
=H_{p,q}^{m,0}\left[az^{-\alpha_p}\biggr\vert\begin{array}{c}
(A_i,\alpha_i)_{1,p-1}, (1, \alpha_p)\\
(B_1+1,\beta_1),(B_j,\beta_j)_{2,q}
\end{array}\right].
\end{multline*}
\end{lemma}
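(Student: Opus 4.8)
\proof
The plan is to argue directly from the Mellin--Barnes representation of the Fox H-function. For $z>0$ the function appearing in both assertions has, since $l=0$ and the top parameter $A_p=1$ carries the weight $\alpha_p$, the representation
\begin{equation*}
H_{p,q}^{m,0}\left[az^{-\alpha_p}\biggr\vert\begin{array}{c}
(A_i,\alpha_i)_{1,p-1}, (1, \alpha_p)\\
(B_j,\beta_j)_{1,q}
\end{array}\right]=\frac{1}{2\pi i}\int_{L}\frac{\prod_{j=1}^m\Gamma(B_j-\beta_j s)}{\Gamma(1-\alpha_p s)\prod_{i=1}^{p-1}\Gamma(A_i-\alpha_i s)\prod_{j=m+1}^q\Gamma(1-B_j+\beta_j s)}\,a^{s}z^{-\alpha_p s}\,ds.
\end{equation*}
I would carry the operator of each assertion inside the integral and let it act only on the elementary factor $z^{-\alpha_p s}$; the whole proof then reduces to two one-line identities for this factor combined with the functional equation of the gamma function.

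For the first assertion I would apply \eqref{eq15} with $\beta=-\alpha_p s$, giving
\begin{equation*}
\frac{d^\al}{dz^\al}z^{-\alpha_p s}=\frac{\Gamma(1-\alpha_p s)}{\Gamma(1-\al-\alpha_p s)}\,z^{-\al-\alpha_p s}.
\end{equation*}
The decisive point is that the factor $\Gamma(1-\alpha_p s)$ produced here cancels exactly the $\Gamma(1-\alpha_p s)$ already sitting in the denominator of the integrand, and is replaced by $\Gamma\big((1-\al)-\alpha_p s\big)$. Pulling out the overall $z^{-\al}$, the remaining integral is precisely the H-function on the right-hand side, in which the top parameter $(1,\alpha_p)$ has been shifted to $(1-\al,\alpha_p)$. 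This settles assertion (1).

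For the second assertion I would use that the Euler operator acts on the elementary factor by $z\frac{d}{dz}z^{-\alpha_p s}=-\alpha_p s\,z^{-\alpha_p s}$, so that
\begin{equation*}
\left(\frac{\beta_1}{\alpha_p}z\frac{d}{dz}+B_1\right)z^{-\alpha_p s}=(B_1-\beta_1 s)\,z^{-\alpha_p s}.
\end{equation*}
Bringing the multiplier $(B_1-\beta_1 s)$ against the factor $\Gamma(B_1-\beta_1 s)$ in the numerator, which is present exactly because $m\ge 1$, and invoking $(B_1-\beta_1 s)\Gamma(B_1-\beta_1 s)=\Gamma(B_1+1-\beta_1 s)$, shifts the bottom parameter $(B_1,\beta_1)$ to $(B_1+1,\beta_1)$. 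The resulting integral is the H-function on the right-hand side, proving assertion (2).

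The hard part will be justifying the interchange of each operator with the Mellin--Barnes integration and the termwise use of \eqref{eq15} along the contour. Here I would lean on the two standing hypotheses: $\nu>0$ yields the exponential decay \eqref{eq8} of the $H^{m,0}$-function and of the differentiated integrand for large $z$, while $\mu>0$ secures absolute convergence of the integral. I would then place the contour $L=L_{\gamma+i\infty}$ with $\gamma$ chosen so that it still separates the poles of the $\Gamma(B_j-\beta_j s)$ as required and, in addition, $\Re(\alpha_p s)<1$ on $L$; the latter means $\Re(-\alpha_p s)>-1$ pointwise, so that \eqref{eq15} legitimately applies to $z^{-\alpha_p s}$ and its complex extension is the one displayed above. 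The exponential gain from the gamma quotient along the vertical line then makes differentiation under the integral sign rigorous, and the formal manipulations above become valid.
\proofend
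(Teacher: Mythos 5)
Your proof is correct, and it reaches the same gamma-function bookkeeping as the paper, but by a more direct route. The paper does not differentiate under the Mellin--Barnes integral with the power rule: for assertion (1) it substitutes $s=zu$ into the Riemann--Liouville integral, interchanges the two integrations, and evaluates the inner one by the beta-function formula \eqref{eq25} (imposing exactly your contour condition, a suitable $\gamma<0$ so that $\Re(\alpha_p s)<1$), which converts the fractional \emph{integral} into an H-function with top parameter $(n-\al+1,\alpha_p)$ as in \eqref{eq26}; the remaining $n$ ordinary derivatives are then carried out via the catalogued identities \eqref{eq9} and \eqref{eq12}. Likewise, for assertion (2) the paper computes $z\frac{d}{dz}$ through \eqref{eq9} and \eqref{eq12} with $N=1$ and then performs, inside the integrand, the very simplification $(B_1-\beta_1 s)\Gamma(B_1-\beta_1 s)=\Gamma(B_1+1-\beta_1 s)$ that you use. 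Your route replaces the beta-function step and the appeal to \eqref{eq9}, \eqref{eq12} by a single term-by-term application of \eqref{eq15} with complex exponent $\beta=-\alpha_p s$; this is cleaner and makes the cancellation of $\Gamma(1-\alpha_p s)$ transparent, but it shifts the analytic weight onto the interchange of the Riemann--Liouville operator with the contour integral. You justify that with the same two ingredients the paper uses ($\mu>0$ for exponential decay of the gamma quotient along vertical lines, and placement of $L_{\gamma+i\infty}$ so that $\Re(-\alpha_p s)>-1$ while still separating the poles of the $\Gamma(B_j-\beta_j s)$), and your pointwise condition is precisely what the paper's beta-integral evaluation requires, so the two justifications coincide; note only that \eqref{eq15} is stated in the paper for real $\beta>-1$, so the complex extension you invoke should be stated explicitly (it follows from the same beta-integral computation). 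One small correction: the decay \eqref{eq8} holds for \emph{large argument} of the H-function, and since the argument here is $az^{-\alpha_p}$ this corresponds to $z\to 0^+$, not to large $z$; it is this decay near the origin that makes the Riemann--Liouville integral, and hence the fractional derivative, well defined, which is how the paper invokes \eqref{eq8}.
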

\proof
By virtue of the asymptotic expression \eqref{eq8}, the fractional derivative of the function\\ $$\varphi(z)=H_{p,q}^{m,0}\left[az^{-\alpha_p}\biggr\vert\begin{array}{c}
(A_i,\alpha_i)_{1,p-1}, (1, \alpha_p)\\
(B_j,\beta_j)_{1,q}
\end{array}\right]$$ is well defined. Substituting $s=zu$ into the definition of the Riemann-Liouville derivative \eqref{eq3}, we obtain
\begin{multline}\label{eq24}
\frac{d^\al \varphi(z)}{dz^\al}  =  \frac{d^n}{d z^n}\frac{z^{n-\al}}{\Gamma(n-\al)}\int_0^1(1-u)^{n-\al-1}
\frac{1}{2\pi i}\\
\times\int_{L_{\gamma+i\infty}}\frac{\prod\limits_{j=1}^m\Gamma(B_j-\beta_js)}{\prod\limits_{i=1}^{p-1}\Gamma(A_i-\al_is)\Gamma(1-\al_p s)\prod\limits_{j=m+1}^q\Gamma(1-B_j+\beta_js)}\left(a(zu)^{-\alpha_p}\right)^{s} d s d u.
\end{multline}
The well-known formula for the beta function expressed in terms of the gamma function
\begin{equation*}
\int_0^1 x^{a-1}(1-x)^{b-1}d x= \frac{\Gamma(a)\Gamma(b)}{\Gamma(a+b)}, \mbox{ where } \Re(a), \Re(b)>0,
\end{equation*}
applies to our case, in which it becomes
\begin{equation}\label{eq25}
\int_0^1(1-u)^{n-\al-1}u^{-\alpha_p s}d u=\frac{\Gamma(n-\al)\Gamma(1-\alpha_p s)}{\Gamma(n-\al- \alpha_p s+1)},
\end{equation}
choosing suitable $\gamma<0$ for the contour $L_{\gamma+i\infty}$ in \eqref{eq24}. Next, interchanging the order of the integrals in \eqref{eq24} and using \eqref{eq25}, we obtain
\begin{equation}\label{eq26}
\frac{d^\al\varphi}{d z^\al}=\frac{d^n}{d z^n}z^{n-\al}H_{p,q}^{m,0}\left[a z^{-\alpha_p}\biggr\vert\begin{array}{c}
(A_i,\al_i)_{1,p-1}, (n-\al+1,\alpha_p)\\
(B_j, \beta_j)_{1,q}
\end{array}\right].
\end{equation}
The first assertion is then proved by applying \eqref{eq9} and \eqref{eq12} to \eqref{eq26}.

To prove the second assertion, let us take the derivative of the H-function using \eqref{eq12} with $N=1.$ This yields
\begin{eqnarray*}
 \lefteqn{\frac{d}{d z}H_{p,q}^{m,0}\left[az^{-\alpha_p}\biggr\vert\begin{array}{c}
(A_i,\al_i)_{1,p-1}, (1,\alpha_p)\\
(B_j,\beta_j)_{1,q}
\end{array}\right]}\\
& =& z\frac{d}{d z}H_{q,p}^{0,m}\left[\frac{z^{\alpha_p}}{a}\biggr\vert\begin{array}{c}
(1-B_j,\beta_j)_{1,q}\\
(1-A_i,\al_i)_{1,p-1}, (0,\alpha_p)
\end{array}\right]\\
&=&  H_{q+1,p+1}^{0,m+1}\left[\frac{z^{\alpha_p}}{a}\biggr\vert\begin{array}{c}
(0,\alpha_p), (1-B_j,\beta_j)_{1,q}\\
(1-A_i,\al_i)_{1,p-1}, (0,\alpha_p), (1,\alpha_p)
\end{array}\right].
\end{eqnarray*}
Here, note that the common term $(0,\al_p)$ is canceled out in the last expression, and thus, by virtue of \eqref{eq9}, we have
\begin{equation*}
 z\frac{d}{d z}H_{p,q}^{m,0}\left[az^{-\alpha_p}\biggr\vert\begin{array}{c}
(A_i,\al_i)_{1,p-1}, (1,\alpha_p)\\
(B_j,\beta_j)_{1,q}
\end{array}\right] = H_{p,q}^{m,0}\left[az^{-\alpha_p}\biggr\vert\begin{array}{c}
(A_i,\al_i)_{1,p-1}, (0,\alpha_p)\\
(B_j,\beta_j)_{1,q}
\end{array}\right].
\end{equation*}
The quantity in the second assertion is calculated as follows:
\begin{eqnarray*}
 \lefteqn{ \left(\frac{\beta_1}{\al_p}z\frac{d}{d z}+B_1\right)H_{p,q}^{m,0}\left[az^{-\alpha_p}\biggr\vert\begin{array}{c}
(A_i,\al_i)_{1,p-1}, (1,\alpha_p)\\
(B_j,\beta_j)_{1,q}
\end{array}\right]}\\
& = & \frac{1}{2\pi i}\int_{L_{\gamma+i\infty}}\left[\frac{\beta_1}{\al_p}\frac{1}{\Gamma(-\alpha_p s)}+\frac{B_1}{\Gamma(1-\alpha_p s)}\right]\\
&&\times\frac{\Gamma\left(B_1-\beta_1s\right)\prod\limits_{j=2}^m\Gamma(B_j-\beta_j s)}{\prod\limits_{j=1}^{p-1}\Gamma(A_j-\al_j s)\prod\limits_{j=m+1}^q\Gamma(1-B_j+\beta_j s)}(az^{-\alpha_p})^sds.
\end{eqnarray*}
A straightforward simplification completes the proof.

An alternative proof of the second assertion can be obtained by using the formulas for particular derivatives of Fox-H functions given in \cite{Hnom}.
\proofend\\
Unlike the previously presented solutions expressed in terms of Mittag-Leffler functions and generalized Wright functions, we present the solutions  expressed in terms of Fox-H functions for $z>0$. 

Henceforth, we use sign function for $x\in\mathbb{R}$ defined as $$\sgn(x)=\begin{cases}
1,\quad &x>0,\\
-1,\quad &x<0.
\end{cases}$$
\begin{proposition}\label{le6}
For the following cases, we have solutions expressed in terms of Fox H-functions.

(1) For $0<\al<1$ and $a,b\in \mathbb{R}$ with $b>0,$ the equation
\begin{equation}\label{eq27}
\frac{d^\al\varphi}{d z^\al}=a\varphi+\frac{b}{\al}z\frac{d\varphi}{dz},\quad z>0
\end{equation}
has as a solution
\begin{equation*}
\varphi(z)=c_1 H_{1,1}^{1,0}\left[\frac{z^{-\alpha}}{b}\biggr\vert\begin{array}{c}
\left(1,\alpha\right)\\
(-\bar{s},1)
\end{array}\right],
\end{equation*}
where $\bar{s}=-\frac{a}{b},$ and $c_1$ is a constant.

(2) For $0<\al<2$ and $a,b,c\in \mathbb{R}$ with $c>0,$ the equation
\begin{equation}\label{eq28}
\frac{d^\al\varphi}{d z^\al}=a\varphi+\frac{b}{\al}z\frac{d\varphi}{dz}+\frac{c}{\al^2}z^2\frac{d^2\varphi}{dz^2},\quad z>0
\end{equation}
has as a solution
\begin{equation*}
\varphi(z)=c_1 H_{1,2}^{2,0}\left[\frac{z^{-\alpha}}{c}\biggr\vert\begin{array}{c}
\left(1,\alpha\right)\\
\left(-s_1,1\right), \left(-s_2,1\right)
\end{array}\right],
\end{equation*}
where $s_{1,2}=\frac{1}{2}\left(\frac{1}{\al}-\frac{b}{c}\pm\sqrt{D}\right)$, $D=\frac{1}{\al^2}-\frac{2b}{\al c}+\frac{b^2}{c^2}-\frac{4a}{c},$ and $c_1$ is a constant.

(3) For $0<\al<1$ and $a_1,a_2,b_1,b_2\in \mathbb{R}$ with $b_1 b_2>0,$ the system 
\begin{equation}\label{eq29}
\begin{cases}
\frac{d^\al\varphi}{d z^\al} = a_1\psi+\frac{b_1}{\alpha}z \frac{d\psi}{dz},\\
\frac{d^\al\psi}{d z^\al} = a_2\varphi+\frac{b_2}{\alpha} z\frac{d\varphi}{dz},
\end{cases}
\quad z>0
\end{equation}
has as a solution
\begin{equation*}
\begin{cases}
\varphi(z) = c_1\sgn(b_1)  H_{1,2}^{2,0}\left[\displaystyle{\frac{z^{-2\alpha}}{4b_1 b_2}}\biggr\vert\begin{array}{c}
(1,2\alpha)\\
\left(\frac{1}{2}-\frac{\tilde{s}_1}{2},1\right), \left(-\frac{\tilde{s}_2}{2},1\right)
\end{array}\right],\\
\psi(z) = c_1 \sqrt{\frac{b_2}{b_1}} H_{1,2}^{2,0}\left[\displaystyle{\frac{z^{-2\alpha}}{4b_1 b_2}}\biggr\vert\begin{array}{c}
(1,2\alpha)\\
\left(-\frac{\tilde{s}_1}{2},1\right), \left(\frac{1}{2}-\frac{\tilde{s}_2}{2},1\right)
\end{array}\right],
\end{cases}
\end{equation*}
where $\tilde{s}_1=-\frac{a_1}{b_1},~ \tilde{s}_2=-\frac{a_2}{b_2},$ and $c_1$ is a constant.
\end{proposition}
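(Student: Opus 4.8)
The plan is to verify each displayed pair by direct substitution, in the same spirit as the proofs of Propositions~\ref{le2} and~\ref{le4}, but with Lemma~\ref{le5} now playing the role that Lemma~\ref{le3} played there. By the linearity of \eqref{eq27}, \eqref{eq28} and \eqref{eq29} the constant $c_1$ factors through, so it suffices to check a single H-function summand. In every case I would first rewrite the right-hand side in factorized operator form, using the notation set up before Proposition~\ref{le4}: the scalar equations become $b\bigl(\frac{1}{\al}z\frac{d}{dz}-\bar{s}\bigr)\varphi$ and, via \eqref{eq20}, $c\bigl(\frac{1}{\al}z\frac{d}{dz}-s_1\bigr)\bigl(\frac{1}{\al}z\frac{d}{dz}-s_2\bigr)\varphi$, while \eqref{eq29} becomes $\frac{d^\al\varphi}{dz^\al}=b_1\bigl(\frac{1}{\al}z\frac{d}{dz}-\tilde{s}_1\bigr)\psi$ and $\frac{d^\al\psi}{dz^\al}=b_2\bigl(\frac{1}{\al}z\frac{d}{dz}-\tilde{s}_2\bigr)\varphi$. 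Throughout I would keep the domain $z>0$, where the H-function identities \eqref{eq9}--\eqref{eq12} and Lemma~\ref{le5} are available.

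For parts (1) and (2) the proposed $\varphi$ is precisely the H-function of Lemma~\ref{le5} with $p=1$, $\al_p=\al$, and argument coefficient $a=1/b>0$ (resp.\ $a=1/c>0$); the hypotheses $\nu=\mu=1-\al>0$ (resp.\ $2-\al>0$) and $a>0$ hold exactly because $0<\al<1$ and $b>0$ (resp.\ $0<\al<2$ and $c>0$). Lemma~\ref{le5}(1) then evaluates the left-hand side, lowering the top pair $(1,\al)\mapsto(1-\al,\al)$ and producing a factor $z^{-\al}$. Lemma~\ref{le5}(2), applied with $\beta_1=1$, $\al_p=\al$ and the appropriate bottom entry as $B_1$, realizes each factor $\frac{1}{\al}z\frac{d}{dz}-\bar{s}$ (resp.\ $-s_1$, $-s_2$), raising that bottom entry by $1$; in part~(2) it is invoked twice, once for each root, which is legitimate since the H-function is unchanged when the first $m$ bottom pairs are permuted. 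Finally a single application of \eqref{eq11} with $\sigma=1$ absorbs the $z^{-\al}$ factor, sending $(1-\al,\al)\mapsto(1,\al)$ on top and raising every bottom entry by $1$, so that the two sides coincide identically.

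Part~(3) is the coupled case and is where the genuine care is needed. Each component now carries $\al_p=2\al$ while the differentiation order is only $\al$; this is harmless because the derivative order and the argument exponent enter independently in the proof of Lemma~\ref{le5} (see \eqref{eq24}--\eqref{eq26}), and the conditions $\nu=\mu=2-2\al>0$, $a=1/(4b_1b_2)>0$ reduce to $0<\al<1$ and $b_1b_2>0$. For the first equation I would apply Lemma~\ref{le5}(1) to $\varphi$, then write $z^{-\al}=2\sqrt{b_1b_2}\,\xi^{1/2}$ (where $\xi$ is the argument, so $z^{-2\al}=4b_1b_2\,\xi$) and use \eqref{eq11} with $\sigma=\tfrac12$ to convert $(1-\al,2\al)\mapsto(1,2\al)$ and shift each bottom entry by $\tfrac12$; separately Lemma~\ref{le5}(2) with $\al_p=2\al$ gives $\frac{1}{\al}z\frac{d}{dz}-\tilde{s}_1=2\bigl(\frac{1}{2\al}z\frac{d}{dz}-\tfrac{\tilde{s}_1}{2}\bigr)$, so that acting on $\psi$ raises the bottom entry $-\tfrac{\tilde{s}_1}{2}\mapsto1-\tfrac{\tilde{s}_1}{2}$. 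The one delicate point is that the numerical prefactors must match: one checks $b_1\sqrt{b_2/b_1}=\sgn(b_1)\sqrt{b_1b_2}$ (valid because $b_1b_2>0$, whence $\sgn(b_1)=\sgn(b_2)$), which is exactly what the factors $\sgn(b_1)$ and $\sqrt{b_2/b_1}$ in the stated solution are engineered to supply. The second equation follows by the symmetry $\varphi\leftrightarrow\psi$, $b_1\leftrightarrow b_2$, $\tilde{s}_1\leftrightarrow\tilde{s}_2$, using the companion identity $\sqrt{b_2/b_1}\cdot\sqrt{b_1b_2}=|b_2|=b_2\,\sgn(b_1)$. I expect this sign-and-square-root reconciliation in part~(3), rather than any of the H-function manipulations, to be the only genuinely error-prone step.
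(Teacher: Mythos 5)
Your proposal is correct and follows essentially the same route as the paper's own proof: the paper likewise verifies the solutions by applying Lemma~\ref{le5}(1) to evaluate the fractional derivative, absorbing the resulting factor $z^{-\al}$ into the parameters via the power-shift identity, and realizing each factorized operator $\frac{1}{\al}z\frac{d}{dz}-s_i$ (resp.\ $\frac{1}{2\al}z\frac{d}{dz}-\frac{\tilde{s}_i}{2}$) through Lemma~\ref{le5}(2) after permuting the first $m$ bottom parameter pairs, with exactly the $\sgn(b_1)$, $\sqrt{b_2/b_1}$ reconciliation you flag in part~(3) (using $\sgn(b_1)=\sgn(b_2)$ since $b_1b_2>0$). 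The only cosmetic difference is that the paper writes out just assertions (2) and (3) with $c_1=1$, declaring (1) analogous, so your plan to check all three adds nothing but redundancy.
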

\proof
Analogously to the proof of Proposition \ref{le4}, the three assertions of this proposition can be proved in a similar manner by using Lemma \ref{le5}. For this reason, we consider only the second and third assertions with $c_1=1,$ without loss of generality. 

Let us consider the second assertion of the proposition. Because the convergence condition of H-functions holds (i.e. $\mu=2-\alpha>0$), we can apply the first assertion of Lemma \ref{le5}. We thereby obtain
\begin{equation*}
\frac{d^\al}{d z^\al}H_{1,2}^{2,0}\left[\frac{z^{-\alpha}}{c}\biggr\vert\begin{array}{c}
\left(1,\alpha\right)\\
\left(-s_1,1\right), \left(-s_2,1\right)
\end{array}\right]=z^{-\al}H_{1,2}^{2,0}\left[\frac{z^{-\alpha}}{c}\biggr\vert\begin{array}{c}
\left(1-\al,\alpha\right)\\
\left(-s_1,1\right), \left(-s_2,1\right)
\end{array}\right]
\end{equation*}
for the left-hand side of \eqref{eq28}, which is further simplified by virtue of \eqref{eq10} into the form
\begin{equation*}
\frac{d^\al}{d z^\al}H_{1,2}^{2,0}\left[\frac{z^{-\alpha}}{c}\biggr\vert\begin{array}{c}
\left(1,\alpha\right)\\
\left(-s_1,1\right), \left(-s_2,1\right)
\end{array}\right]=cH_{1,2}^{2,0}\left[\frac{z^{-\alpha}}{c}\biggr\vert\begin{array}{c}
\left(1,\alpha\right)\\
\left(-s_1+1,1\right), \left(-s_2+1,1\right)
\end{array}\right].
\end{equation*}
The right-hand side of \eqref{eq28} is obtained in analogy to the second assertion of Proposition~\ref{le4}, by using \eqref{eq20} and interchanging the parameters $(-s_1,1)$ and $(-s_2,1)$ in the solution $\varphi(z)$ in accordance with the second assertion of Lemma~\ref{le5}.

Next, considering the third assertion, for the sake of compatibility with the second assertion of Lemma~\ref{le5}, we rewrite the right-hand side of \eqref{eq29} as follows:
\begin{equation*}
\begin{cases}
a_1\psi+\frac{b_1}{\alpha}z \frac{d\psi}{dz}=2b_1\left(\frac{1}{2\al}z\frac{d}{dz}+\frac{a_1}{2b_1}\right)\psi,\\
a_2\varphi+\frac{b_2}{\alpha} z\frac{d\varphi}{dz}=2b_2\left(\frac{1}{2\al}z\frac{d}{dz}+\frac{a_2}{2b_2}\right)\varphi.
\end{cases}
\end{equation*}
Because the convergence condition $\mu=1-\alpha>0$ holds for both $\varphi(z)$ and $\psi(z)$, we can apply the first assertion of Lemma \ref{le5} and \eqref{eq10}. We thereby obtain
\begin{equation*}
\begin{cases}
\displaystyle{\frac{d^\al\varphi}{dz^\al}} =2\sgn(b_1)\sqrt{b_1b_2} H_{1,2}^{2,0}\left[\displaystyle{\frac{z^{-2\alpha}}{4b_1 b_2}}\biggr\vert\begin{array}{c}
(1,2\alpha)\\
\left(1-\frac{\tilde{s}_1}{2},1\right), \left(\frac{1}{2}-\frac{\tilde{s}_2}{2},1\right)
\end{array}\right],\\
\displaystyle{\frac{d^\al\psi}{dz^\al}} = 2\sgn(b_2)b_2H_{1,2}^{2,0}\left[\displaystyle{\frac{z^{-2\alpha}}{4b_1 b_2}}\biggr\vert\begin{array}{c}
(1,2\alpha)\\
\left(\frac{1}{2}-\frac{\tilde{s}_1}{2},1\right), \left(1-\frac{\tilde{s}_2}{2},1\right)
\end{array}\right].
\end{cases}
\end{equation*}
Applying the second assertion of Lemma \ref{le5} with $B_1=-\frac{\tilde{s}_1}{2}$ for the first equation and $B_1=-\frac{\tilde{s}_2}{2}$ for the second equation, we obtain the desired form.
\proofend\\
We now show that for a special case of \eqref{eq28}, we have solutions expressed in terms of Wright functions.
\begin{corollary}\label{le7}
Let the determinant of \eqref{eq19} be $D=\frac{1}{\al^2}-\frac{2b}{\al c}+\frac{b^2}{c^2}-\frac{4a}{c}=\frac{1}{4}$, and suppose $c\neq 0$. 

(1) For $0<\alpha<2$ and $c> 0$, \eqref{eq28} has a solution of the following form:
\begin{equation*}
\varphi(z)=c_1z^{\frac{1}{2}\left(\frac{1}{\al}-\frac{b}{c}+\frac{1}{2}\right)\al}\Psi\left(-\frac{2z^{-\frac{\al}{2}}}{\sqrt{c}};-\frac{\al}{2},\frac{1}{2}\left(\frac{3}{\al}-\frac{b}{c}+\frac{1}{2}\right)\al
\right).
\end{equation*}

(2) For $\alpha>2$, \eqref{eq22} has a solution of the following form:
\begin{equation*}
\varphi(z)=\sum_{k=1}^{n}c_kz^{\al-k}{}_2\Psi_1\left(\frac{cz^\alpha}{4}\left\vert\begin{array}{c}
\left(\frac{3}{2}-\frac{2k+1}{\alpha}+\frac{b}{c},2\right),(1,1)\\
(1+\alpha-k,\alpha)
\end{array}\right)\right..
\end{equation*}
\end{corollary}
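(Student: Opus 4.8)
The common thread in both parts is that the hypothesis $D=\tfrac14$ forces $\sqrt{D}=\tfrac12$, so the two roots of the characteristic equation \eqref{eq19} satisfy $s_1-s_2=\tfrac12$. A gap of exactly $\tfrac12$ between two $\Gamma$-arguments is precisely what the Legendre duplication formula $\Gamma(w)\Gamma(w+\tfrac12)=2^{1-2w}\sqrt{\pi}\,\Gamma(2w)$ is built to exploit: it merges two gamma factors into a single one whose argument is doubled. The plan is therefore to take the solutions already produced in Proposition~\ref{le6}(2) and Proposition~\ref{le4}(2) and collapse one gamma factor in each, turning $H^{2,0}_{1,2}$ into a Wright function and ${}_3\Psi_1$ into ${}_2\Psi_1$. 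The two parts carry different ranges of $\al$ simply because each specializes a different proposition: part~(1) inherits $0<\al<2$ from the Fox~$H$ solution, while part~(2) inherits $\al>2$ from the generalized Wright solution.

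For part~(1) I would start from the $H^{2,0}_{1,2}$ solution of \eqref{eq28} given in Proposition~\ref{le6}(2) and pass to its Mellin--Barnes integrand. The two lower gamma factors are $\Gamma(-s_1-s)\Gamma(-s_2-s)$; since $-s_1=-s_2-\tfrac12$, duplication collapses them into a single factor $\Gamma(-2s_1-2s)$, up to a constant and a factor $4^{s}$ that I absorb into the argument, reducing the integral to $H^{1,0}_{1,1}\!\left[\tfrac{4z^{-\al}}{c}\,\vert\,(1,\al);(-2s_1,2)\right]$. I would then apply \eqref{eq11} with shift $\sigma=s_1$ to pull out a power $z^{\al s_1}$ and move the lower parameter to $(0,2)$, apply \eqref{eq10} with $k=2$ to halve all second entries, so the argument becomes $\tfrac{2z^{-\al/2}}{\sqrt{c}}$ and the lower parameter becomes $(0,1)$, and finally invoke the $-1<\beta_1<0$ branch of \eqref{eq14} together with \eqref{eq13} to identify the result as a Wright function. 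Checking $\al s_1=\tfrac\al2\!\left(\tfrac1\al-\tfrac bc+\tfrac12\right)$ and $1+\al s_1=\tfrac\al2\!\left(\tfrac3\al-\tfrac bc+\tfrac12\right)$ then matches the stated exponent and the second parameter of $\Psi$, the remaining constants being absorbed into $c_1$.

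For part~(2) no integral representation is needed: I would work term by term on the ${}_3\Psi_1$ solution of \eqref{eq22} from Proposition~\ref{le4}(2). The numerator parameter $(1,1)$ contributes $\Gamma(1+i)=i!$, which cancels the $i!$ in the definition of ${}_p\Psi_q$, leaving the two numerator factors $\Gamma\!\left(1-\tfrac k\al-s_1+i\right)\Gamma\!\left(1-\tfrac k\al-s_2+i\right)$, whose arguments again differ by $\tfrac12$. Duplication merges them into $\Gamma\!\left(1-\tfrac{2k}\al-2s_2+2i\right)$ and produces a factor $4^{-i}$ that converts the argument $cz^\al$ into $\tfrac{cz^\al}{4}$; the resulting series is a ${}_2\Psi_1$ with a second entry equal to $2$. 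Substituting $2s_2=\tfrac1\al-\tfrac bc-\tfrac12$ gives the merged first parameter $\tfrac32-\tfrac{2k+1}\al+\tfrac bc$, exactly as claimed, with the $k$-dependent constant $\sqrt{\pi}\,2^{2k/\al+2s_2}$ absorbed into $c_k$.

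The computations themselves are routine once duplication is recognized as the right tool; the only genuinely delicate step is the $H$-function bookkeeping in part~(1). In particular, one must choose the correct branch of \eqref{eq14}: the second entry of the relevant lower parameter is $-\tfrac\al2$, and the branch for $-1<\beta_1<0$ applies precisely because $0<\al<2$ guarantees $-1<-\tfrac\al2<0$. Tracking the multiplicative constants and the successive rescalings of the argument through \eqref{eq10} and \eqref{eq11} without error is the main place where care is required; everything else follows mechanically.
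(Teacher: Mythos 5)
Your proposal is correct and follows essentially the same route as the paper: both parts specialize the solutions of Proposition~\ref{le6}(2) and Proposition~\ref{le4}(2) and apply the Legendre duplication formula to merge the two gamma factors whose arguments differ by $\tfrac12$, then in part~(1) use \eqref{eq10}, \eqref{eq11}, the $-1<\beta_1<0$ branch of \eqref{eq14} and \eqref{eq13} to land on the Wright function. The only deviations are cosmetic (you apply \eqref{eq11} before \eqref{eq10}, where the paper does the reverse, and you write the merged parameter via $s_2$ rather than $s_1$), and your parameter checks, including $-1<-\tfrac{\al}{2}<0$ for the branch choice, match the paper's computation.
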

\proof
To prove the first assertion, we need to show that $\varphi(z)$ corresponds to the solution given in  the second assertion of Proposition~\ref{le6}. When $D=\frac{1}{4}$ and $0<\al<2$, the roots of the characteristic equation \eqref{eq19} become $s_1=\frac{1}{2}\left(\frac{1}{\alpha}-\frac{b}{c}+\frac{1}{2}\right)$ and $s_2=s_1-\frac{1}{2}.$ In this case, the solution given in the second assertion of Proposition~\ref{le6} is
\begin{equation*}
\tilde{\varphi}(z) = c_1H_{1,2}^{2,0}\left[\frac{z^{-\alpha}}{c}\biggr\vert\begin{array}{c}
\left(1,\alpha\right)\\
\left(-s_1,1\right),\left(-s_1+\frac{1}{2},1\right)
\end{array}\right].
\end{equation*}
Then, applying the duplication formula for the gamma function
\begin{equation*}
\Gamma\left(-s_1-s\right)\Gamma\left(-s_1-s+\frac{1}{2}\right)=\sqrt{\pi}2^{1+2\left(s_1+s\right)}\Gamma\left(-2s_1-2s\right)
\end{equation*}
to $\tilde{\varphi}(z),$ it becomes
\begin{eqnarray*}
\tilde{\varphi}_1(z) & = & c_1\sqrt{\pi}2^{1+2s_1}H_{1,1}^{1,0}\left[\frac{4z^{-\alpha}}{c}\biggr\vert\begin{array}{c}
\left(1,\alpha\right)\\
\left(-2s_1,2\right)
\end{array}\right]\\
& = & c_1\sqrt{\pi}4^{s_1}H_{1,1}^{1,0}\left[\frac{2z^{-\frac{\al}{2}}}{\sqrt{c}}\biggr\vert\begin{array}{c}
\left(1,\frac{\al}{2}\right)\\
(-2s_1,1)
\end{array}\right]\\
& = & c_1\sqrt{\pi}c^{s_1}z^{\frac{\al}{2}\left(\frac{1}{\al}-\frac{b}{c}+\frac{1}{2}\right)}H_{1,1}^{1,0}\left[\frac{2z^{-\frac{\al}{2}}}{\sqrt{c}}\biggr\vert\begin{array}{c}
\left(\frac{\al}{2}\left(\frac{3}{\al}-\frac{b}{c}+\frac{1}{2}\right),\frac{\al}{2}\right)\\
(0,1)
\end{array}\right]\\
& = & c_1\sqrt{\pi}c^{s_1}z^{\frac{\al}{2}\left(\frac{1}{\al}-\frac{b}{c}+\frac{1}{2}\right)}{}_0\Psi_1\left[-\frac{2z^{-\frac{\al}{2}}}{\sqrt{c}}\biggr\vert\begin{array}{c}
-\\
\left(\frac{\al}{2}\left(\frac{3}{\al}-\frac{b}{c}+\frac{1}{2}\right),-\frac{\al}{2}\right)
\end{array}\right]\\
& = & c_1\sqrt{\pi}c^{s_1}z^{\frac{\al}{2}\left(\frac{1}{\al}-\frac{b}{c}+\frac{1}{2}\right)}\Psi\left(-\frac{2z^{-\frac{\al}{2}}}{\sqrt{c}};-\frac{\al}{2},\frac{\al}{2}\left(\frac{3}{\al}-\frac{b}{c}+\frac{1}{2}\right)\right).
\end{eqnarray*}
The second through fifth equalities here follow from \eqref{eq10}, \eqref{eq11}, \eqref{eq14} and \eqref{eq13}, respectively. It is thus seen that $\tilde{\varphi}(z)=\varphi(z).$

We can prove the second assertion by applying the duplication formula for the gamma function 
\begin{equation*}
\Gamma\left(1-\frac{k}{\al}-s_1+i\right)\Gamma\left(1-\frac{k}{\al}-s_1+i+\frac{1}{2}\right)=\sqrt{\pi}2^{\frac{2k+1}{\al}-\frac{1}{2}-i}\Gamma\left(2-\frac{2k}{\al}-2s_1+2i\right)
\end{equation*}
to the solution given in the second assertion of Proposition \ref{le4}.
\proofend\\

To this point, we have presented several exact solutions of \eqref{eq1} and \eqref{eq2}. These solutions are classified according to the kind of special functions used to express them. Now, we discuss the second advantage of rewriting the right-hand sides of \eqref{eq1} and \eqref{eq2} in factorized differential operator form. Specifically, we show that utilizing this form, we are able to generalize our treatment of \eqref{eq1} and \eqref{eq2} to the cases of \eqref{eq6} and \eqref{eq7} with arbitrary integers $m,$ $m_1$ and $m_2.$ As a result, we find that the solutions to \eqref{eq21} and \eqref{eq22} of Proposition~\ref{le4} and the solutions to \eqref{eq27} and \eqref{eq28} of Proposition~\ref{le6} can all be represented in a unified manner by a single general formula. 
\section{Generalization of \eqref{eq1} and \eqref{eq2} to higher-order derivatives}
We seek a solution of the following FODE with an $m$th-order Cauchy-Euler differential operator on the right-hand side:
\begin{equation}\label{eq30}
\frac{d^\al\varphi}{d z^\al}=\frac{a_m}{\al^m}z^m\frac{d^m\varphi}{d z^m}+\frac{a_{m-1}}{\al^{m-1}}z^{m-1}\frac{d^{m-1}\varphi}{d z^{m-1}}+\cdots+\frac{a_{1}}{\al}z\frac{d\varphi}{d z}+a_0\varphi,\quad z>0,
\end{equation}
where $a_i$ $(i=0,\dots,m)$ are real numbers and $a_m\neq 0.$ 
We represent the right-hand side of \eqref{eq30} by $P(\varphi).$ Then, from
\begin{equation*}
P(z^s)=\left(a_0+\sum_{i=1}^m a_i\prod_{j=0}^{i-1}\left(\frac{s}{\al}-\frac{j}{\al}\right)\right)z^s,
\end{equation*}
we see that the characteristic polynomial of $P$ is
\begin{equation}\label{eq32}
\tilde{P}(s)=a_0+\sum_{i=1}^m a_i\prod_{j=0}^{i-1}\left(s-\frac{j}{\al}\right).
\end{equation}
Let $s_1,s_2,\dots, s_m$ be the roots of the characteristic polynomial $\tilde{P}(s)$. Then, we can rewrite the right-hand side of \eqref{eq30} as
\begin{equation*}
P(\varphi)=a_m\prod_{i=1}^m\left(\frac{1}{\al}z\frac{d}{d z}-s_i\right)\varphi.
\end{equation*}

Now, generalizing the results of the previous section, we formulate the following theorem. The proof can be carried out analogously to the proofs of Proposition~\ref{le4} and Proposition~\ref{le6}.
\begin{theorem}\label{th8}
(1) If $0<\alpha<m$ and $a_m>0,$ then \eqref{eq30} has the following as a solution:
\begin{equation*}
\varphi(z)=c_1 H_{1,m}^{m,0}\left[\frac{z^{-\alpha}}{a_m}\biggr\vert\begin{array}{c}
(1,\alpha)\\
\left(-s_1,1\right), \left(-s_2,1\right),\dots, \left(-s_m,1\right)
\end{array}\right].
\end{equation*}

(2) If $\alpha > m,$ then \eqref{eq30} has as a solution
\begin{equation*}
\varphi(z)=\sum_{k=1}^n c_k z^{\al-k}{}_{m+1}\Psi_1\left[a_m z^\al\biggr\vert\begin{array}{c}
\left(1-\frac{k}{\al}-s_1,1\right),\dots, \left(1-\frac{k}{\al}-s_m,1\right),(1,1)\\
(1+\al-k,\al)
\end{array}\right],
\end{equation*}
where $c_k$ ($k=1,\ldots,n$) are arbitrary constants.
\end{theorem}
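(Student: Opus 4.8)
The plan is to verify directly that the two proposed functions satisfy \eqref{eq30}, proceeding exactly as in the proofs of Proposition~\ref{le6}(2) and Proposition~\ref{le4}(2) but now with $m$ factors in the factorized operator $P(\varphi)=a_m\prod_{i=1}^m\left(\frac{1}{\al}z\frac{d}{dz}-s_i\right)\varphi$. In each part I would compute the left-hand side $\frac{d^\al\varphi}{dz^\al}$ and the right-hand side $P(\varphi)$ separately and check that they coincide; everything reduces to tracking the parameter shifts produced by the technical lemmas and matching a single overall constant $a_m$.

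For assertion (1) I would first observe that the H-function $H_{1,m}^{m,0}$ has $\mu=\nu=m-\al$, so the convergence hypothesis $\mu>0$ of Lemma~\ref{le5} is precisely $\al<m$ and the lemma applies. Applying the first assertion of Lemma~\ref{le5} (with $\al_p=\al$ and argument $z^{-\al}/a_m$) shifts the top entry $(1,\al)\mapsto(1-\al,\al)$ and produces a factor $z^{-\al}$; absorbing that factor by the translation identity \eqref{eq11} returns the top entry to $(1,\al)$, shifts every lower entry $(-s_i,1)\mapsto(-s_i+1,1)$, and leaves the prefactor $a_m$, giving the left-hand side. For the right-hand side I would apply the second assertion of Lemma~\ref{le5} once per factor: with $\beta_1=1$ and $\al_p=\al$ the operator $\frac{\beta_1}{\al_p}z\frac{d}{dz}+B_1$ equals $\frac{1}{\al}z\frac{d}{dz}-s_i$ exactly when $B_1=-s_i$, and it shifts $(-s_i,1)\mapsto(-s_i+1,1)$. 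Because the superscript equals $q=m$, all $m$ lower entries lie in the distinguished group and may be freely permuted, so I can bring each $(-s_i,1)$ into first position in turn; iterating over $i=1,\dots,m$ and multiplying by $a_m$ reproduces the left-hand side.

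For assertion (2) I would use linearity to reduce to a single summand $\varphi_k(z)=z^{\al-k}{}_{m+1}\Psi_1[\cdots]$; here $\Delta=\al-(m+1)$, so the convergence condition $\Delta>-1$ is again exactly $\al>m$, which guarantees that ${}_{m+1}\Psi_1$ is entire and that term-by-term fractional differentiation \eqref{eq16} is legitimate. After permuting the top entries so that $(1,1)$ is first, Corollary~\ref{cor1} applies with $B_1=1+\al-k>0$ and $\beta_1=\al$; since $\al$ is non-integer in the regime $n-1<\al<n$, the relevant integer in the corollary is $1$ (the value $0$ fails because $-k$ is a negative integer), so the left-hand side becomes $a_m z^{\al-k}{}_{m+1}\Psi_1[\cdots]$ with each entry $(1-\frac{k}{\al}-s_i,1)$ shifted to $(2-\frac{k}{\al}-s_i,1)$. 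For the right-hand side I would apply the second assertion of Lemma~\ref{le3} once per factor, taking $R=-s_i$, $\sigma=\al$, $\al_1=1$ and the $s_i$-entry as $A_1$; the prescribed power $\frac{A_1\sigma}{\al_1}-\al R=\al-k$ coincides with the power of $\varphi_k$ at every stage, each application carries prefactor $\frac{\sigma}{\al_1\al}=1$ and shifts that entry by $+1$, and permuting and iterating over $i=1,\dots,m$ then multiplying by $a_m$ again reproduces the left-hand side.

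The parameter bookkeeping itself is routine; the point to state carefully is the legitimacy of applying the lemmas iteratively. Specifically, I would check that the applicability conditions persist after each shift — the superscript $m\ge1$ for Lemma~\ref{le5}(2), and $B_1>0$ together with the power constraint for Lemma~\ref{le3}(2) and Corollary~\ref{cor1} — and that permuting the (symmetric) lower entries of the H-function, respectively the top entries of ${}_{m+1}\Psi_1$, into first position is valid at each step. For assertion (2) one must in addition confirm that the integer in Corollary~\ref{cor1} stays equal to $1$ throughout, which is exactly what the non-integrality of $\al$ provides; this is the only genuinely delicate verification in the argument.
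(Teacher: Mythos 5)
Your proposal is correct and follows exactly the route the paper intends: the paper gives no separate proof of Theorem~\ref{th8} beyond stating that it is carried out analogously to Propositions~\ref{le4} and~\ref{le6}, and your argument is precisely that analogy executed in detail — Lemma~\ref{le5}(1) plus \eqref{eq11} against iterated applications of Lemma~\ref{le5}(2) for assertion (1), and Corollary~\ref{cor1} against iterated applications of Lemma~\ref{le3}(2) for assertion (2), with the factorization $P(\varphi)=a_m\prod_{i=1}^m\bigl(\tfrac{1}{\al}z\tfrac{d}{dz}-s_i\bigr)\varphi$ supplying the $m$ commuting factors. Your bookkeeping checks out, including the points the paper leaves implicit: $\mu=\nu=m-\al>0$ and $\Delta=\al-m-1>-1$ matching the hypotheses $\al<m$ and $\al>m$, the invariance of the power $z^{\al-k}$ under each operator factor, the admissibility of permuting the symmetric parameter pairs, and the identification of the shift integer as $1$ via the non-integrality of $\al$.
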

It is now clear that the solutions given in the first and second assertions of Proposition~\ref{le4} can be expressed by using only the first assertion of Theorem \ref{th8} and that the solutions of the first and second assertion of Proposition~\ref{le6} can be expressed by using only the second assertion of Theorem \ref{th8} by taking $m=1$ and $m=2,$ respectively.

In a similar manner, we can generalize the system \eqref{eq2} as 
\begin{equation}\label{eq33}
\begin{cases}
\frac{d^\al\varphi}{dz^\al} = \frac{a_{m_1}}{\al^{m_1}}z^{m_1}\frac{d^{m_1}\psi}{dz^{m_1}}+\frac{a_{m_1-1}}{\al^{m_1-1}}z^{m_1-1}\frac{d^{m_1-1}\psi}{dz^{m_1-1}}+\cdots+\frac{a_1}{\al}z\frac{d\psi}{dz}+a_0\psi,\\
\frac{d^\al\psi}{dz^\al} = \frac{b_{m_2}}{\al^{m_2}}z^{m_2}\frac{d^{m_2}\varphi}{dz^{m_2}}+\frac{b_{{m_2}-1}}{\al^{{m_2}-1}}z^{{m_2}-1}\frac{d^{{m_2}-1}\varphi}{dz^{{m_2}-1}}+\cdots+\frac{b_1}{\al}z\frac{d\varphi}{dz}+b_0\varphi,
\end{cases}
\quad z>0,
\end{equation}
where $a_i$ $(i=1,\dots,m_1)$ and $b_j$ $(j=1,\dots,m_2)$ are real numbers and $a_{m_1}b_{m_2}\neq 0$.
The characteristic polynomials of the right-hand sides of the first and second equations of this system are
\begin{equation}\label{eq34}
P_1(s)=a_0+\sum_{i=1}^{m_1}a_i\prod_{j=0}^{i-1}\left(s-\frac{j}{\al}\right),\qquad P_2(s)=b_0+\sum_{i=1}^{m_2} b_i\prod_{j=0}^{i-1}\left(s-\frac{j}{\al}\right).
\end{equation}
We write the roots of the characteristic polynomials $P_1(s)$ and $P_2(s)$ as $s_1,s_2,\dots, s_{m_1}$ and $s_{{m_1}+1},s_{m_1+2},\dots, s_{m_1+{m_2}},$ respectively. Then, we can rewrite the right-hand sides of the equations in \eqref{eq33} as
$$
\sum_{i=0}^{m_1}\frac{a_i}{\alpha^i}z^i\frac{d^{i}\psi}{dz^{i}}= 2^{m_1}a_{m_1}\prod_{i=1}^{m_1}\left(\frac{1}{2\al}z\frac{d}{dz}-\frac{s_i}{2}\right)\psi(z)
$$
and
$$
\sum_{i=0}^{m_2}\frac{b_i}{\alpha^i}z^i\frac{d^{i}\varphi}{dz^{i}}=2^{m_2}b_{m_2}\prod_{i=m_1+1}^{m_1+{m_2}}\left(\frac{1}{2\al}z\frac{d}{dz}-\frac{s_i}{2}\right)\varphi(z).
$$

The following result concerns solutions of \eqref{eq33}. The proof can be carried out analogously to the proofs of Proposition~\ref{le4} and Proposition~\ref{le6}. 
\begin{theorem}\label{th9}
Here, we use $m$ to represent  $m_1+m_2$ and $A$ to represent $2^{m_1+m_2}a_{m_1}b_{m_2}.$

(1) If $0<\alpha<\frac{m}{2}$ and $a_{m_1}b_{m_2}>0,$ then the system \eqref{eq33} has the following as a solution:
\begin{eqnarray*}
\varphi(z) & = & c_1\sgn(b_{m_2}) H_{1,m}^{m,0}\left[\frac{z^{-2\alpha}}{A}\biggr\vert\begin{array}{c}
(1,2\alpha)\\
\left(-\frac{s_i}{2}+\frac{1}{2},1\right)_{1,{m_1}},\left(-\frac{s_i}{2},1\right)_{{m_1}+1,m}
\end{array}\right],\\
\psi(z) & = & c_12^{\frac{{m_2}-{m_1}}{2}}\sqrt{\frac{b_{m_2}}{a_{m_1}}}H_{1,m}^{m,0}\left[\frac{z^{-2\alpha}}{A}\biggr\vert\begin{array}{c}
(1,2\alpha)\\
\left(-\frac{s_i}{2},1\right)_{1,{m_1}},\left(-\frac{s_i}{2}+\frac{1}{2},1\right)_{{m_1}+1,m}
\end{array}\right].
\end{eqnarray*}

(2) If $\alpha>\frac{m}{2},$ then the system \eqref{eq33} has the following as a solution:
\begin{eqnarray*}
\varphi(z)&=&\sum_{k=1}^{n}c_{k,1}z^{\alpha-k} \varphi_{k1}(z)+2^{m_1} a_{m_1}\sum_{k=1}^n c_{k,2}z^{2\alpha-k}\varphi_{k2}(z),\\
\psi(z)&=&2^{m_2}b_{m_2}\sum_{k=1}^n c_{k,1}z^{2\alpha-k} \psi_{k1}(z) +\sum_{k=1}^n c_{k,2}z^{\alpha-{k}}\psi_{k2}(z), 
\end{eqnarray*}
where 
\begin{eqnarray*}
\varphi_{k1}(z)&=&{}_{m+1}\Psi_1\left[A z^{2\al}\biggr\vert\begin{array}{c}
\left(1-\frac{k}{2\al}-\frac{s_i}{2},1\right)_{1,{m_1}},\left(\frac{1}{2}-\frac{k}{2\al}-\frac{s_i}{2},1\right)_{{m_1}+1,m},(1,1)\\
(1+\al-k,2\alpha)
\end{array}\right],\\
\varphi_{k2}(z)&=&{}_{m+1}\Psi_1\left[A z^{2\al}\biggr\vert\begin{array}{c}\left(\frac{3}{2}-\frac{k}{2\al}-\frac{s_i}{2},1\right)_{1,{m_1}},\left(1-\frac{k}{2\al}-\frac{s_i}{2},1\right)_{{m_1}+1,m},(1,1)\\
(1+2\al-k,2\alpha)
\end{array}\right],\\
\psi_{k1}(z)&=&{}_{m+1}\Psi_1\left[A z^{2\al}\biggr\vert\begin{array}{c}
\left(1-\frac{k}{2\al}-\frac{s_i}{2},1\right)_{1,{m_1}},\left(\frac{3}{2}-\frac{k}{2\al}-\frac{s_i}{2},1\right)_{{m_1}+1,m},(1,1)\\
(1+2\al-k,2\alpha)
\end{array}\right],\\
\psi_{k2}(z)&=&{}_{m+1}\Psi_1\left[A z^{2\al}\biggr\vert\begin{array}{c}
\left(\frac
{1}{2}-\frac{k}{2\al}-\frac{s_i}{2},1\right)_{1,{m_1}},\left(1-\frac{k}{2\al}-\frac{s_i}{2},1\right)_{{m_1}+1,m},(1,1)\\
(1+\al-k,2\alpha)
\end{array}\right],
\end{eqnarray*}
and $c_1,$ $c_{k,1}$ and $c_{k,2}$ ($k=1,\dots,n$) are constants.
\end{theorem}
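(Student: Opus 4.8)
The plan is to prove Theorem~\ref{th9} by reducing the coupled system to the single-equation situation already handled in Theorem~\ref{th8}, exploiting linearity and the symmetry between the two equations exactly as was done in the passage from the second to the third assertion of Proposition~\ref{le4} and Proposition~\ref{le6}. First I would invoke linearity of \eqref{eq33}: it suffices to verify that each indexed summand is a solution. For the second assertion ($\alpha>\frac{m}{2}$) I would show that the pair $(z^{\alpha-k}\varphi_{k1}(z),\,2^{m_2}b_{m_2}z^{2\alpha-k}\psi_{k1}(z))$ satisfies \eqref{eq33}; the companion pair built from $\varphi_{k2},\psi_{k2}$ then follows by the symmetry that interchanges $\varphi\leftrightarrow\psi$, $m_1\leftrightarrow m_2$, and the two blocks of roots. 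For the first assertion ($0<\alpha<\frac{m}{2}$) the single H-function pair is verified directly using Lemma~\ref{le5}, together with the identities \eqref{eq9}--\eqref{eq11}.

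The key computational steps run in parallel to the proof of Proposition~\ref{le4}(3). On the left-hand side I would compute $\frac{d^\al}{dz^\al}(z^{\alpha-k}\varphi_{k1})$ by applying Corollary~\ref{cor1} with $m=1$, which shifts the parameter $(1+\alpha-k,2\alpha)$ down by $\alpha$, multiplies by the argument prefactor $A z^{2\alpha}$, and raises the first block of numerator parameters; the effect is precisely to turn $\varphi_{k1}$ into an object matching $z^{2\alpha-k}\psi_{k1}$ up to the constant $A=2^{m_1+m_2}a_{m_1}b_{m_2}$. Likewise $\frac{d^\al}{dz^\al}(z^{2\alpha-k}\psi_{k1})$ is computed via Corollary~\ref{cor1} with $m=0$. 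On the right-hand side I would use the factorized operator form
\begin{equation*}
\sum_{i=0}^{m_1}\frac{a_i}{\alpha^i}z^i\frac{d^{i}\psi}{dz^{i}}= 2^{m_1}a_{m_1}\prod_{i=1}^{m_1}\left(\frac{1}{2\al}z\frac{d}{dz}-\frac{s_i}{2}\right)\psi,
\end{equation*}
and apply the second assertion of Lemma~\ref{le3} once for each of the $m_1$ factors (with $\sigma=2\alpha$ and the appropriate shift $R$), each application advancing one numerator parameter in the first block of $\psi_{k1}$ by $1$. Matching the two sides then reduces to the elementary bookkeeping identity that $m_1$ successive unit shifts carry the parameter list of $\psi_{k1}$ into that produced by the fractional derivative, with the scalar constants $2^{m_1}a_{m_1}\cdot 2^{m_2}b_{m_2}=A$ reconciling. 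The analogous chain with $P_2$ and the factor $2^{m_2}b_{m_2}$ closes the second equation of the system.

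For the first assertion I would check the convergence hypotheses of Lemma~\ref{le5}, namely $\nu>0$ and $\mu>0$, which hold here because $0<\alpha<\frac{m}{2}$ forces $\mu=m-2\alpha>0$; then apply Lemma~\ref{le5}(1) to differentiate each H-function and Lemma~\ref{le5}(2) repeatedly (once per root) to generate the Cauchy-Euler operator on the right, using \eqref{eq10} to absorb the scaling by $A$ and $\sgn(b_{m_2})$ to fix the sign when $a_{m_1}b_{m_2}>0$ with $b_{m_2}$ of either sign. The main obstacle I anticipate is purely organizational rather than conceptual: keeping the two blocks of parameters $\left(\cdots\right)_{1,m_1}$ and $\left(\cdots\right)_{m_1+1,m}$ correctly aligned through the asymmetric $+\frac12$ shifts, and ensuring that applying $m_1$ (resp.\ $m_2$) shift operators from Lemma~\ref{le3}(2) lands on exactly the claimed argument of $\varphi_{k1},\psi_{k1}$ rather than on a permuted or off-by-one parameter list. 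Once the $m_1=m_2=1$ base case of Proposition~\ref{le4}(3) is granted, the general case is a faithful index-by-index replication, so no genuinely new difficulty arises.
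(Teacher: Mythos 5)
Your proposal is correct and follows essentially the same route as the paper, which omits the details of this proof and states only that it ``can be carried out analogously to the proofs of Proposition~\ref{le4} and Proposition~\ref{le6}'': your use of linearity, of Corollary~\ref{cor1} (with $m=1$ for the $\varphi$-component and $m=0$ for the $\psi$-component) matched against the factorized Cauchy--Euler operators via repeated application of Lemma~\ref{le3}(2), the symmetry argument for the companion pair, and Lemma~\ref{le5} together with \eqref{eq9}--\eqref{eq11} under the convergence condition $\mu=m-2\alpha>0$ in the H-function case is exactly the intended expansion of that remark. One minor wording slip does not affect the argument: the fractional derivative computed from Corollary~\ref{cor1} raises \emph{all} $m$ numerator parameters of $\varphi_{k1}$ (both blocks, the pair $(1,1)$ alone being preserved), not only the first block, which is in fact what your subsequent matching statement --- that $m_1$ unit shifts carry the parameter list of $\psi_{k1}$ into the one produced by the derivative --- correctly requires.
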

We, thus, see that the third assertion of Proposition~\ref{le4} and the third assertion of Proposition~\ref{le6} correspond to a particular case ${m_1}={m_2}=1$ of Theorem \ref{th9}. 
\section{Solutions to a class of fractional linear partial differential equations and systems thereof.}\label{sec:5}
In this section, we demonstrate the application of the propositions presented in this paper by providing exact solutions of generalizations of \eqref{eq4} and \eqref{eq5}. The solutions obtained in this section reduce to a previously known solution in a particular case.
\subsection{Solutions to a  fractional linear evolution equation}
Let us consider the following linear fractional evolution equation with variable coefficients: 
\begin{multline}\label{eq36}
\frac{\pal^\al u(t,x)}{\pal t^\al}=a_m(x+b)^p\frac{\pal^m u(t,x)}{\pal x^m}+a_{m-1}(x+b)^{p-1}\frac{\pal^{m-1} u(t,x)}{\pal x^{m-1}}\\
+\cdots+a_1(x+b)^{p-m+1}\frac{\pal u(t,x)}{\pal x}+a_0(x+b)^{p-m}u(t,x),\quad \alpha>0,\quad x>-b,\quad t>0.
\end{multline}
Here $a_i$ $(i=0,\dots,m)$, $b$ and $p$ are real numbers and $a_m>0.$
The infinitesimal scaling symmetries of \eqref{eq36} are
\begin{equation*}
X_1=u\frac{\pal}{\pal u},\qquad X_2=(x+b)\frac{\pal}{\pal x}+\frac{m-p}{\al}t\frac{\pal}{\pal t}.
\end{equation*}
The invariant solution corresponding to the generator $X=aX_1+X_2$ ($a\in \mathbb{R}$) is
\begin{equation}\label{eq37}
u=(x+b)^a\varphi(z),\mbox{ with } z=t(x+b)^{\frac{p-m}{\al}},
\end{equation}
where $\varphi(z)$ solves the following reduced FODE:
\begin{equation*}
\frac{d^\al\varphi}{dz^\al}=\frac{a_m\left(m-p\right)^m}{\alpha^m}z^m\frac{d^m\varphi}{dz^m}+\frac{\bar{a}_{m-1}}{\alpha^{m-1}}z^{m-1}\frac{d^{m-1}\varphi}{dz^{m-1}}+\cdots+\frac{\bar{a}_1}{\alpha}z\frac{d\varphi}{dz}+\bar{a}_0,\quad z>0. 
\end{equation*}
Here, the parameters $\bar{a}_{0},\bar{a}_{1},\dots,\bar{a}_{m-1}$ depend on  $a_1,a_2,\dots,a_m,$ $\al,$ $m$ and $p.$

Now, let us take a closer look at the case $m=2$:
\begin{equation}\label{eq38}
\frac{\pal^\al u}{\pal t^\al}=a_2(x+b)^p u_{x x}+a_1(x+b)^{p-1}u_x+a_0(x+b)^{p-2}u,\mbox{ for }\alpha>0,~ x>-b \mbox{ and } t>0,
\end{equation}
with $a_2> 0.$ For the case of $a_0=0$, solutions of \eqref{eq38} were found in \cite{Metz94} through Laplace transformation method and were expressed by Fox-H functions. Now, generalizing the result of \cite{Metz94}, let us find solutions of \eqref{eq38} for any values of $a_0$.

Applying the transformation \eqref{eq37} with $m=2,$ \eqref{eq38} reduced to the following FODE
\begin{equation}\label{eq39}
\frac{d^\al\varphi}{d z^\al}=\bar{a}\varphi+\frac{\bar{b}}{\al}z\frac{d\varphi}{d z}+\frac{\bar{c}}{\al^2}z^2\frac{d^2\varphi}{d z^2}, \quad z>0,
\end{equation}
where $\bar{a}=a(a-1)a_2+a a_1+a_0,$  $\bar{b}=(p-2)\left(\frac{p-2}{\al}+2a-1+\frac{a_1}{a_2}\right)a_2$ and $\bar{c}=(p-2)^2a_2.$ 
From Proposition~\ref{le4} and Proposition~\ref{le6}, we have the following solutions of \eqref{eq39} for  $p\neq 2:$ 
\begin{enumerate}
\item For $0<\al<2,$
\begin{eqnarray*}
\varphi(z) & = & c_1 H_{1,2}^{2,0}\left[\frac{z^{-\al}}{(p-2)^2a_2}\biggr\vert\begin{array}{c}
\left(1,\al\right)\\
\left(-s_1,1\right), \left(-s_2,1\right)
\end{array}\right].
\end{eqnarray*}
\item For $\al>2,$ 
\begin{equation*}
\varphi(z)=\sum_{k=1}^n c_i z^{\al-k}{}_3\Psi_1\left[(p-2)^2 a_2 z^\al\biggr\vert\begin{array}{c}
\left(1-\frac{k}{\al}-s_1,1\right), \left(1-\frac{k}{\al}-s_2,1\right), (1,1)\\
(1+\al-k,\al)
\end{array}\right],
\end{equation*}
\end{enumerate}
where 
\begin{eqnarray*}
 s_1&=&\frac{1}{2(p-2)}\left(1-2a-\frac{a_1}{a_2}+\sqrt{\left(1-\frac{a_1}{a_2}\right)^2-\frac{4a_0}{a_2}}\right),\\
 s_2&=&\frac{1}{2(p-2)}\left(1-2a-\frac{a_1}{a_2}-\sqrt{\left(1-\frac{a_1}{a_2}\right)^2-\frac{4a_0}{a_2}}\right).
\end{eqnarray*}
If $a_0=\frac{a_2}{4}\left(\frac{a_1}{a_2}-\frac{p}{2}\right)\left(\frac{a_1}{a_2}+\frac{p}{2}-2\right)$ in \eqref{eq38}, then we can express these solutions in terms of the Wright function and the generalized Wright function ${}_2\Psi_1,$ respectively. In this case, by Corollary \ref{le7} we obtain the following solutions of \eqref{eq39}:
\begin{enumerate}
\item For $0<\al<2$,
\begin{equation*}
\varphi(z)=c_1z^{\frac{\alpha }{2}s}\Psi\left(-\frac{2z^{-\frac{\al}{2}}}{\left\vert p-2\right\vert\sqrt{a_2}};-\frac{\al}{2},1+\frac{\alpha }{2}s\right).
\end{equation*}
\item For $\alpha>2$,
\begin{equation*}
\varphi(z)=\sum_{k=1}^{n}c_kz^{\alpha-k}{}_2\Psi_{1}\left[\frac{(p-2)^2a_2z^\alpha}{4}\left\vert\begin{array}{c}
\left(2-\frac{2k}{\alpha}-s,2\right),(1,1)\\
(1+\alpha-k,\alpha)
\end{array}\right]\right.,
\end{equation*}
\end{enumerate}
where 
\begin{equation*}
s=\frac{1}{(p-2)}\left(\frac{p}{2}-2a-\frac{a_1}{a_2}\right).
\end{equation*}
If we set $a_0=a_1=p=0$ in the above solutions, then they correspond to the solutions obtained in \cite{luch} and \cite{luch1}.

If $p=2,$ then  \eqref{eq39} becomes 
\begin{equation*}
\frac{d^\al\varphi}{dz^\al}=(a(a-1)a_2+aa_1+a_0)\varphi, \quad z>0,
\end{equation*}
and the solution given in Proposition~\ref{le2} is
\begin{equation*}
\varphi(z)=\sum_{k=1}^nc_iz^{\al-k} E_{\al,1+\al-k}\left((a(a-1)a_2+a a_1+a_0)z^\al\right).
\end{equation*}
Finally, we can obtain the invariant solutions of \eqref{eq38} by substituting these solutions into \eqref{eq37}.
\subsection{Solutions to a system of fractional linear equations}
Let us consider the following system:
\begin{equation}\label{eq50}
\begin{cases}
\frac{\pal^\al u}{\pal t^\al} = a_1(x+c)^{m_1}v_x+b_1(x+c)^{m_1-1}v,\\
\frac{\pal^\al v}{\pal t^\al} = a_2(x+c)^{m_2}u_x+b_2(x+c)^{m_2-1}u,
\end{cases}
\quad \alpha>0,\quad x>-c,\quad t>0,
\end{equation}
where $a_1,a_2,b_1,b_2,m_1,m_2,c\in \mathbb{R}$ and $a_1a_2>0$. Then, with the substitution
\begin{equation*}
\begin{cases}
u(x,t) = (x+c)^{d+\frac{m_1}{2}}\varphi(z),\\
v(x,t) = (x+c)^{d+\frac{m_2}{2}}\psi(z),
\end{cases}
\mbox{ with } z=t(x+c)^{\frac{m_1+m_2-2}{2\al}} \mbox{ and } d\in\mathbb{R},
\end{equation*}
we obtain the system of FODEs
\begin{equation}\label{eq40}
\begin{cases}
\frac{d^\al\varphi}{d z^\al} = \bar{a}_1\psi+\frac{\bar{b}_1}{\al}z\frac{d\psi}{d z},\\
\frac{d^\al\psi}{d z^\al} = \bar{a}_2\varphi+\frac{\bar{b}_2}{\al}z\frac{d\varphi}{d z}, 
\end{cases}
\quad z>0,
\end{equation}
where
\begin{eqnarray*}
\bar{a}_1&=& \left(d+\frac{m_2}{2}\right)a_1+b_1,\qquad \bar{a}_2= \left(d+\frac{m_1}{2}\right)a_2+b_2,\\
\bar{b}_1&=&\frac{(m_1+m_2-2)a_1}{2},\qquad \bar{b}_2= \frac{(m_1+m_2-2)a_2}{2}.
\end{eqnarray*}

We obtain the following results by virtue of Proposition~\ref{le4} and Proposition~\ref{le6}.
\subsubsection*{Case 1.} Let us consider the case $m=m_1+m_2\neq 2.$ Then, for $0<\al<1$, the solution to \eqref{eq40} given in Proposition~\ref{le6} is 
\begin{eqnarray*}
\varphi(z)& = & c_1\sgn\left((m-2)a_1\right)H_{1,2}^{2,0}\left[\frac{z^{-2\al}}{(m-2)^2a_1a_2}\biggr\vert\begin{array}{c}
(1,2\al)\\
\left(\frac{1}{2}-\frac{\tilde{s}_1}{2},1\right), \left(-\frac{\tilde{s}_2}{2},1\right)
\end{array}\right],\\
\psi(z) & = & c_1 \sqrt{\frac{a_2}{a_1}}H_{1,2}^{2,0}\left[\frac{z^{-2\al}}{(m-2)^2a_1a_2}\biggr\vert\begin{array}{c}
(1,2\al)\\
\left(-\frac{\tilde{s}_1}{2},1\right), \left(\frac{1}{2}-\frac{\tilde{s}_2}{2},1\right)
\end{array}\right],
\end{eqnarray*}
and for $\al>1$, the solution given in the third assertion of Proposition~\ref{le4} is
\begin{align*}
\varphi(z)  =& \sum_{k=1}^n c_{k,1}z^{\al-k}{}_3\Psi_1\left[M^2 z^{2\al}\biggr\vert\begin{array}{c}
\left(1-\frac{k}{2\al}-\frac{\tilde{s}_1}{2},1\right), \left(\frac{1}{2}-\frac{k}{2\al}-\frac{\tilde{s}_2}{2},1\right), (1,1)\\
(1+\al-k,2\al)
\end{array}\right]\\
 & +Ma_1\sum_{k=1}^n c_{k,2}z^{2\al-k}{}_3\Psi_1\left[M^2 z^{2\al}\biggr\vert\begin{array}{c}
\left(\frac{3}{2}-\frac{k}{2\al}-\frac{\tilde{s}_1}{2},1\right), \left(1-\frac{k}{2\al}+\frac{\tilde{s}_2}{2},1\right), (1,1)\\
(1+2\al-k,2\al)
\end{array}\right],\\
\psi(z)  = &Ma_2 \sum_{k=1}^n c_{k,1}z^{2\al-k}{}_3\Psi_1\left[M^2 z^{2\al}\biggr\vert\begin{array}{c}
\left(1-\frac{k}{2\al}+\frac{\tilde{s}_1}{2},1\right), \left(\frac{3}{2}-\frac{k}{2\al}-\frac{\tilde{s}_2}{2},1\right), (1,1)\\
(1+2\al-k,2\al)
\end{array}\right]\\
 & +\sum_{k=1}^n c_{k,2}z^{\al-k}{}_3\Psi_1\left[M^2 z^{2\al}\biggr\vert\begin{array}{c}
\left(\frac{1}{2}-\frac{k}{2\al}-\frac{\tilde{s}_1}{2},1\right), \left(1-\frac{k}{2\al}-\frac{\tilde{s}_2}{2},1\right), (1,1)\\
(1+\al-k,2\al)
\end{array}\right],
\end{align*}
where
\begin{equation*}
M=m-2\qquad \tilde{s}_1 =-\frac{(2d+m_2)a_1+2b_1}{(m-2)a_1}, \qquad \tilde{s}_2 =-\frac{(2d+m_1)a_2+2b_2}{(m-2)a_2}.
\end{equation*}
\subsubsection*{Case 2.} Next, let us consider the case $m_1+m_2=2.$
In this case, we can rewrite \eqref{eq40} as
\begin{equation*}
\begin{cases}
\frac{d^\al\varphi}{dz^\al}=\bar{a}_1\psi,\\
\frac{d^\al\psi}{dz^\al}=\bar{a}_2\varphi,
\end{cases}
\quad z>0,
\end{equation*}
where 
\begin{equation*}
\bar{a}_1=(d+\frac{m_2}{2})a_1+b_1,\qquad \bar{a}_2=(d+\frac{m_1}{2})a_2+b_2,
\end{equation*}
and the solution given in Proposition~\ref{le2} is
\begin{eqnarray*}
\varphi(z) & = & \sum_{k=1}^nc_{k,1}z^{\al-k}E_{2\al,1+\al-k}(\bar{a}_1\bar{a}_2 z^{2\al})+\bar{a}_1\sum_{k=1}^nc_{k,2}z^{2\al-k}E_{2\al,1+2\al-k}(\bar{a}_1\bar{a}_2z^{2\al}),\\
\psi(z) & = & \bar{a}_2\sum_{k=1}^nc_{k,1}z^{\al-k}E_{2\al,1+2\al-k}(\bar{a}_1\bar{a}_2 z^{2\al})+\sum_{k=1}^nc_{k,2}z^{2\al-k}E_{2\al,1+\al-k}(\bar{a}_1\bar{a}_2z^{2\al}).
\end{eqnarray*}
Then, similarly to the previous cases, using the solutions of the reduced system, we are able to obtain invariant solutions of (\ref{eq50}).

Readers interested in more applications of the results obtained in this work are referred to works \cite{our1} and \cite{our2}.
\section{Conclusion}
We have systematically presented several solutions expressed in terms of Mittag-Leffler functions, generalized Wright functions and Fox H-functions, to the fractional linear differential equation \eqref{eq1} and system of fractional linear differential equations \eqref{eq2}. Because first-order and second-order differential equations are the most commonly used differential equations in the modeling of systems in natural and social sciences, we studied the solutions of fractional linear equations with integer-order derivatives up to second order in detail. In addition, the solutions of fractional linear equations  and systems of equations with integer-order derivatives of higher-orders were also treated. The results obtained in this case can be easily proved by following the steps in the proofs given here for the case of second-order derivatives.

In this work, we have merely presented several solutions of the studied equations and systems. Because we have expressed these solutions in terms of well-known special functions, we have not studied their properties. Also, we have not sought a general method for solving equations of the type considered in this paper. This is a topic of future study. 
\section*{Acknowledgements}
This work was supported by JSPS (KAKENHI Grant No. 15H03613) and by the Science and Technology Foundation of Mongolia (Grant No. SSA-012/2016).




 \bigskip \smallskip

 \it

 \noindent
$^1$ Graduate School of Mathematics \\
Kyushu University, \\
744 Motooka, Fukuoka 819-0395, Japan\\[4pt]
e-mail: k-dorujigotofu@math.kyushu-u.ac.jp

$^2$ Institute of Mathematics for Industry\\
Kyushu University,\\
744 Motooka, Fukuoka 819-0395, Japan \\[4pt]
e-mail: ochiai@imi.kyushu-u.ac.jp

$^3$ Department of Mathematics,\\
National University of Mongolia, \\
P.B. 507/38, Chingiltei 6, Ulaanbaatar\\ 15141, Mongolia\\
e-mail: zunderiya@gmail.com
\end{document}